\title[Nonstandard $n$-distances]{Nonstandard $n$-distances based on certain geometric constructions}\thanks{Corresponding author: Jean-Luc Marichal is with the Mathematics Research Unit, University of Luxembourg, Maison du Nombre, 6, avenue de la Fonte, L-4364 Esch-sur-Alzette, Luxembourg. Email: jean-luc.marichal[at]uni.lu}
\author{Gergely Kiss}
\address{Alfred Renyi Institute of Mathematics, Hungarian Academy of Science, HU-1053 Budapest, Re\'altanoda u.\ 13-15, Hungary}
\email{kigergo57[at]gmail.com}
\author{Jean-Luc Marichal}
\address{Mathematics Research Unit, University of Luxembourg, Maison du Nombre, 6, avenue de la Fonte, L-4364 Esch-sur-Alzette, Luxembourg}
\email{jean-luc.marichal[at]uni.lu}
\date{Revised, January 5, 2022}
\begin{document}

\theoremstyle{plain}
\newtheorem{theorem}{Theorem}[section]
\newtheorem{lemma}[theorem]{Lemma}
\newtheorem{proposition}[theorem]{Proposition}
\newtheorem{corollary}[theorem]{Corollary}
\newtheorem{fact}[theorem]{Fact}
\newtheorem{conjecture}[theorem]{Conjecture}
\newtheorem*{main}{Main Theorem}

\theoremstyle{definition}
\newtheorem{definition}[theorem]{Definition}
\newtheorem{example}[theorem]{Example}
\newtheorem{algorithm}{Algorithm}

\theoremstyle{remark}
\newtheorem{remark}{Remark}
\newtheorem{claim}{Claim}

\newcommand{\N}{\mathbb{N}}
\newcommand{\R}{\mathbb{R}}
\newcommand{\Cdot}{\boldsymbol{\cdot}}

\begin{abstract}
The concept of $n$-distance was recently introduced to generalize the classical definition of distance to functions of $n$ arguments. In this paper we investigate this concept through a number of examples based on certain geometrical constructions. In particular, our study shows to which extent the computation of the best constant associated with an $n$-distance may sometimes be difficult and tricky. It also reveals that two important graph theoretical concepts, namely the total length of the Euclidean Steiner tree and the total length of the minimal spanning tree constructed on $n$ points, are instances of $n$-distances.
\end{abstract}

\keywords{Metric geometry, $n$-distance, simplex inequality, Chebyshev ball, Euclidean minimal spanning tree, Euclidean Steiner tree, smallest enclosing ball.}

\subjclass[2010]{Primary 39B72; Secondary 26D99}

\maketitle

\section{Introduction}

Let $X$ be a set containing at least two elements. Let also $n\geq 2$ be an integer and set $\R_+=\left[0,+\infty\right[$. Recall that an \emph{$n$-distance} \cite{KisMarTeh16} on $X$ is a map $d\colon X^n\to\R_+$ that satisfies the following three conditions:
\begin{itemize}
\item[(i)] $d(x_1,\ldots,x_n)=0$ if and only if $x_1=\cdots =x_n$,
\item[(ii)] $d$ is a symmetric function, i.e., invariant under any permutation of its arguments,
\item[(iii)] $d$ satisfies the \emph{simplex inequality} (\emph{triangle inequality} if $n=2$), i.e.,
$$
d(x_1,\ldots,x_n) ~\leq ~\sum_{i=1}^nd(x_1,\ldots,x_n)_i^z{\,},\qquad x_1,\ldots,x_n,z\in X,
$$
where $d(x_1,\ldots,x_n)_i^z$ stands for the function obtained from $d(x_1,\ldots,x_n)$ by setting its $i$th variable to $z$.
\end{itemize}
The \emph{best constant} associated with an $n$-distance $d$ on $X$ is the number $K^*_n\in\left]0,1\right]$ defined by
$$
K^*_n ~=~ \sup_{\textstyle{x_1,\ldots,x_n,z\in X\atop |\{x_1,\ldots,x_n\}|{\,}\geq{\,} 2}}\frac{d(x_1,\ldots,x_n)}{\sum_{i=1}^nd(x_1,\ldots,x_n)_i^z}{\,}.
$$
Thus defined, $K^*_n$ is the infimum of the set of numbers $K_n\in\left]0,1\right]$ for which the condition
$$
d(x_1,\ldots,x_n)~\leq ~ K_n\sum_{i=1}^nd(x_1,\ldots,x_n)_i^z{\,},\qquad x_1,\ldots,x_n,z\in X,
$$
holds.

To give an example, recall that the \emph{cardinality based $n$-distance} \cite{KisMarTeh16,KisMarTeh18} is defined on $X$ by
$$
d(x_1,\ldots,x_n) ~=~ |\{x_1,\ldots,x_n\}|-1.
$$
Its associated best constant is $K^*_n=(n-1)^{-1}$ and it is attained, e.g., when $x_1\neq x_2$ and $x_2=\cdots =x_n=z$.

Although various definitions of $n$-variable distances have been proposed thus far in the literature on metric spaces (see, e.g., Deza and Deza~\cite[Chapter 3]{Deza2014}), the concept of $n$-distance was introduced recently by the authors \cite{KisMarTeh16,KisMarTeh18} as an $n$-ary generalization of both the concepts of distance (i.e., $2$-distance) and $D$-metric \cite{Dhage1992} (i.e., $3$-distance).

One of the interesting features of this new concept is the existence of the associated best constant $K^*_n$, which is generally not easy to compute (see \cite{KisMar,KisMarTeh18}). For instance, we show in this paper that the map that gives the total length of a minimum spanning tree of the complete Euclidean graph constructed from $n$ points in $\R^q$ is an $n$-distance. We also show that the associated best constants for $n=2$ and $n=3$ are given by $K^*_2=1$ and $K^*_3=1/\sqrt{3}$, respectively. However, the exact value of $K^*_n$ remains unknown for any $n\geq 4$.

It was observed \cite{KisMar} that the best constant associated with an $n$-distance is always greater than or equal to $(n-1)^{-1}$. An $n$-distance whose associated best constant has precisely the value $(n-1)^{-1}$ is said to be \emph{standard} \cite{KisMar}; otherwise, it is said to be \emph{nonstandard}. For instance, the cardinality-based $n$-distance defined above is standard.

In this paper we explore various examples of nonstandard $n$-distances based on geometrical constructions and, in some cases, we provide the associated best constants. These examples, as well as many other examples already investigated in \cite{KisMar,KisMarTeh18}, show that when a map $d\colon X^n\to\R_+$ satisfies conditions (i) and (ii) of the definition of an $n$-distance, it is generally not easy to check whether it also satisfies condition (iii). Beyond this difficulty, we also observe that finding the best constant associated with a given $n$-distance is typically a challenging problem, often leading to tricky arguments that strongly depend on the $n$-distance itself. This observation seems to show that this class of problems defines a relatively new direction of research and that general results along this line would be most welcome. We hope that by providing various examples of nonstandard $n$-distances here we might attract researchers and make this exciting topic better known.

This paper is divided into six sections as follows. In Sections 2 to 5, we present and discuss four different examples of $n$-distances; thus these sections can be read independently of each other. In Section 2 we introduce an $n$-distance based on the concept of the Chebyshev ball in any finite-dimensional Euclidean space. We show that it is nonstandard for any $n\geq 3$ and provide the exact value of the associated best constant. In Section 3 we introduce a Euclidean version of this latter $n$-distance, namely the diameter of a largest inner Euclidean ball associated with $n$ points. We also prove that this latter $n$-distance is nonstandard for any $n\geq 3$. However, its associated best constant is not known when $n\geq 4$. Sections 4 and 5 are devoted to two graph-theoretic based $n$-distances. The first one is defined as the length of a minimum spanning tree on the complete Euclidean graph constructed on $n$ points. We show that it is an $n$-distance. It is nonstandard for $n=3$ and $n=4$ and we conjecture that it is nonstandard for any $n\geq 3$. The second one is defined as the length of the Steiner tree on the complete Euclidean graph constructed on $n$ points. We show that it is an $n$-distance and that it is standard for $n=3$. Here again, the exact value of the best constant is not known when $n\geq 4$. Finally, we end this paper in Section 6 with some concluding remarks and open problems.

The \emph{simplex ratio} (\emph{triangle ratio} if $n=2$) $R_d$ associated with an $n$-distance $d$ on $X$ is defined for any $x_1,\ldots,x_n,z\in X$ such that $|\{x_1,\ldots,x_n\}|\geq 2$ by
$$
R_d(x_1,\ldots,x_n;z) ~=~ \frac{d(x_1,\ldots,x_n)}{\sum_{i=1}^nd(x_1,\ldots,x_n)_i^z}{\,}.
$$
We will often use this concept throughout.

\section{Edge length of a largest inner Chebyshev ball}

In this section, we define a nonstandard $n$-distance based on Chebyshev balls in $\R^q$, where $q\geq 1$ is any fixed integer, and we provide the associated best constant. The special case when $q=1$ was already considered by the authors in \cite{KisMar} and can be stated as follows.

\begin{proposition}[Length of a largest inner interval \cite{KisMar}]\label{prop:Llii}
Let $d\colon\R^n\to\R_+$ be the map defined by
$$
d(x_1,\ldots,x_n) ~=~ \max_{i=1,\ldots,n-1}(x_{(i+1)}-x_{(i)}),
$$
where the symbol $x_{(i)}$ stands for the $i$th smallest element among $x_1,\ldots,x_n$. Then $d$ is an $n$-distance on $\R$. Its best constant is $K^*_n=2/n$ and is attained at any $(x_1,\ldots,x_n;z)$ such that $x_1< x_2=\cdots =x_n$ and $z=(x_1+x_2)/2$.
\end{proposition}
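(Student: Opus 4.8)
The plan is to verify the three defining conditions (i)--(iii) of an $n$-distance and then, separately, to pin down the best constant. Conditions (i) and (ii) are immediate: the function depends only on the multiset $\{x_1,\dots,x_n\}$, so it is symmetric, and the maximal gap $\max_i(x_{(i+1)}-x_{(i)})$ vanishes exactly when all the ordered values coincide, i.e. when $x_1=\cdots=x_n$. The substance is in (iii) and in the evaluation of $K_n^*$.

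For the simplex inequality, fix $x_1,\dots,x_n,z\in\R$ and let $[a,b]$ be a longest gap realizing $d(x_1,\ldots,x_n)$, so $d(x_1,\dots,x_n)=b-a$, with $a,b$ consecutive points of the sorted list and no $x_j$ strictly inside $(a,b)$. I would split on the location of $z$. If $z\notin(a,b)$, then replacing any one $x_i$ lying in $[a,b]$ by $z$ (there is at least one such index, since $a$ itself is some $x_i$, unless all the $x_i$'s moved --- here one has to be a little careful, but in every case at least one of the $n$ modified tuples still contains both endpoints $a$ and $b$ among its entries, or contains a gap at least as long). More robustly: for \emph{every} index $i$, the modified tuple $(x_1,\dots,x_n)_i^z$ still contains at least $n-1$ of the original points; if $z$ falls outside $(a,b)$ then whichever modified tuple retains both $a$ and $b$ has a gap $\geq b-a$, giving one term on the right-hand side already $\geq d(x_1,\dots,x_n)$, and the inequality follows trivially (even without the sum). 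If instead $z\in(a,b)$, write $z=a+t$ with $0<t<b-a$; pick an index $i$ with $x_i=a$ and an index $j$ with $x_j=b$. In the tuple with $x_i\to z$, the sorted list contains $z$ and $b$ with nothing between, so its value is $\geq b-z=(b-a)-t$; symmetrically, the tuple with $x_j\to z$ has value $\geq z-a=t$. Summing just these two terms gives $((b-a)-t)+t=b-a=d(x_1,\dots,x_n)$, so the full sum over $i=1,\dots,n$ is at least $d(x_1,\dots,x_n)$, which is the simplex inequality with room to spare.

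The heart of the matter is the best constant. The computation above already shows $R_d\le 1$ always, but we want the sharp bound $2/n$. For the lower bound on $K_n^*$, I would just evaluate the ratio at the proposed extremal configuration: $x_1<x_2=\cdots=x_n$ and $z=(x_1+x_2)/2$. There $d(x_1,\dots,x_n)=x_2-x_1$. Moving any of the $n-1$ copies of $x_2$ down to $z$ gives a tuple $\{x_1,z,x_2,\dots,x_2\}$ whose maximal gap is $\max\{z-x_1,\,x_2-z\}=(x_2-x_1)/2$; moving $x_1$ up to $z$ gives the tuple $\{z,x_2,\dots,x_2\}$ with maximal gap $x_2-z=(x_2-x_1)/2$. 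Hence the denominator is $n\cdot(x_2-x_1)/2$ and the ratio equals $2/n$, so $K_n^*\ge 2/n$.

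For the matching upper bound $R_d\le 2/n$ I would argue as follows. Fix an arbitrary admissible tuple and $z$, and again let $[a,b]$ be a longest gap, $L=b-a=d(x_1,\dots,x_n)$. By the case analysis above, if $z\notin(a,b)$ then some denominator term is already $\ge L$ and $n\ge 2$ gives $R_d\le 1/n\cdot(\text{something})\le$ --- actually one checks $R_d\le 1/(n-1)\le 2/n$ more carefully; the real work is the case $z=a+t\in(a,b)$. For each index $i$, I would bound $d\big((x_1,\dots,x_n)_i^z\big)$ from below. If $x_i\notin\{a,b\}$, then the modified tuple still contains both $a$ and $b$ with no original point between them, but it may now contain $z\in(a,b)$; its maximal gap is therefore $\ge\max\{z-a,b-z\}=\max\{t,L-t\}\ge L/2$. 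If $x_i=a$ (there is exactly one or possibly several such indices — say $k_a\ge 1$ of them), the modified tuple loses the left endpoint of the gap but gains $z$, so it contains $z$ and $b$ consecutively, value $\ge L-t$; and it also still contains $b$ and, if $k_a=1$, it contains whatever lay just left of $a$, so one gets value $\ge\max\{L-t,\ \text{other gaps}\}\ge L-t$. Symmetrically the $k_b\ge1$ indices with $x_i=b$ give value $\ge t$. Thus
\[
\sum_{i=1}^n d\big((x_1,\dots,x_n)_i^z\big)\ \ge\ (n-k_a-k_b)\,\frac{L}{2}+k_a\,(L-t)+k_b\,t.
\]
Minimizing the right-hand side over $0\le t\le L$ and over $k_a,k_b\ge1$ with $k_a+k_b\le n$ is an elementary linear optimization: the minimum of $k_a(L-t)+k_b t$ over $t$ is $\min\{k_a,k_b\}\cdot L$ attained at an endpoint, and then the bound is smallest when $k_a=k_b$ and $k_a+k_b=n$ (so $n$ even, $k_a=k_b=n/2$), giving $\sum\ge (n/2)L$, i.e. $R_d\le 2/n$; when $n$ is odd the same bookkeeping yields a slightly larger value, consistent with $\le 2/n$. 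I expect this last optimization — getting the bound uniformly sharp and handling the parity and the "other gaps" terms cleanly — to be the only genuinely fiddly part; everything else is bookkeeping on sorted lists.
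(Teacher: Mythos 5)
A preliminary remark: the paper does not prove this proposition itself but imports it from \cite{KisMar}, so there is no in-paper argument to compare yours against; I therefore assess your proposal on its own merits. Your verification of conditions (i) and (ii), your proof of the simplex inequality (with constant $1$), and your lower bound $K_n^*\geq 2/n$ via the configuration $x_1<x_2=\cdots=x_n$, $z=(x_1+x_2)/2$ are all correct.

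The upper bound $R_d\leq 2/n$, however, contains a genuine error in the final optimization. From your stated per-term bounds you get $\sum_{i=1}^n d(x_1,\dots,x_n)_i^z\geq (n-k_a-k_b)\frac{L}{2}+k_a(L-t)+k_b t$, and minimizing the last two terms over $t\in[0,L]$ gives $\min\{k_a,k_b\}\cdot L$, so the whole right-hand side becomes $\frac{nL}{2}-(\max\{k_a,k_b\}-\min\{k_a,k_b\})\frac{L}{2}$. This equals $\frac{nL}{2}$ precisely when $k_a=k_b$ and is \emph{strictly smaller} otherwise; for instance $k_a=n-1$, $k_b=1$, $t\to L$ drives your bound all the way down to $L$. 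So the minimum over $k_a,k_b$ is not attained at $k_a=k_b=n/2$ as you claim --- that is where your bound is \emph{largest} --- and $R_d\leq 2/n$ does not follow from the inequality you wrote. The argument is salvageable: the weak bound $\geq L-t$ (resp.\ $\geq t$) should be charged only to the index where the value $a$ (resp.\ $b$) disappears entirely, i.e.\ only when $k_a=1$ (resp.\ $k_b=1$); whenever $k_a\geq 2$ the modified tuple still contains $a$, $z$ and $b$, so each such index enjoys the stronger bound $\max\{t,L-t\}\geq L/2$, and likewise for $b$. Splitting into the cases $k_a=k_b=1$ (sum $\geq (n-2)\frac{L}{2}+(L-t)+t=\frac{nL}{2}$), exactly one of $k_a,k_b$ equal to $1$ (sum $\geq (n-1)\max\{t,L-t\}+\min\{t,L-t\}=(n-2)\max\{t,L-t\}+L\geq\frac{nL}{2}$), and $k_a,k_b\geq 2$ (all $n$ terms $\geq L/2$) then closes the argument. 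Finally, the case $z\notin(a,b)$ of the upper bound is only gestured at; it does hold (if, say, $z\leq a$, then every modified tuple except possibly the one replacing a unique copy of $b$ retains a gap of length $\geq L$, so the sum is $\geq(n-1)L\geq\frac{nL}{2}$), but as written it is asserted rather than proved.
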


Suppose now that $q\geq 2$. Recall that a (closed) Chebyshev $q$-ball in $\R^q$ of radius $r>0$ centered at a point $c\in\R^q$ is the hypercube defined by
$$
B_r[c] ~=~ \{x\in\R^q:\|x-c\|_{\infty}\leq r\},
$$
where $\|\cdot\|_{\infty}$ is the Chebyshev norm in $\R^q$. Each face of this hypercube is parallel to one of the coordinate hyperplanes.

\begin{definition}\label{de:InnCheBall}
Let $q\geq 2$ be an integer and let $x_1,\ldots,x_n$ be $n$ points in $\R^q$. We say that a Chebyshev $q$-ball $B$ in $\R^q$ is an \emph{inner Chebyshev ball associated with $x_1,\ldots,x_n$} if it satisfies the following two conditions.
\begin{itemize}
\item The interior of $B$ contains none of the points $x_1,\ldots,x_n$.
\item At least one point lies on one face of $B$ and at least one point lies on the opposite face.
\end{itemize}
\end{definition}

In the following proposition, we show that the map $d\colon(\R^q)^n\to\R_+$ that carries the $n$-tuple $(x_1,\ldots,x_n)$ into the edge length of a largest inner Chebyshev ball associated with $x_1,\ldots,x_n$ is an $n$-distance on $\R^q$. We also provide its associated best constant. In Figure~\ref{fig:5pq2} we illustrate this $n$-distance through an example based on $5$ points in the plane ($q=2$).

\setlength{\unitlength}{6ex}
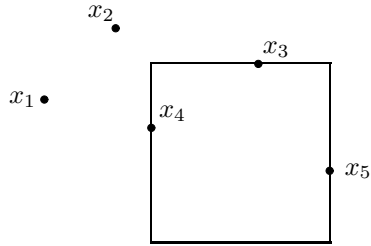
\begin{figure}[htbp]
\begin{center}
\begin{picture}(5,3.5)
\put(2,0){\framebox(2.5,2.5){}}
\put(0.5,2){\circle*{0.1}}\put(1.5,3){\circle*{0.1}}\put(2,1.6){\circle*{0.1}}
\put(3.5,2.5){\circle*{0.1}}\put(4.5,1){\circle*{0.1}}
\put(0.2,2){\makebox(0,0){$x_1$}}\put(1.3,3.2){\makebox(0,0){$x_2$}}
\put(3.75,2.7){\makebox(0,0){$x_3$}}\put(2.3,1.8){\makebox(0,0){$x_4$}}
\put(4.9,1){\makebox(0,0){$x_5$}}
\end{picture}
\caption{A largest inner square constructed from five points in $\R^2$}
\label{fig:5pq2}
\end{center}
\end{figure}

\begin{proposition}\label{propconj:1}
The map $d\colon(\R^q)^n\to\R_+$ that carries $(x_1,\ldots,x_n)$ into the edge length of a largest inner Chebyshev ball associated with $x_1,\ldots,x_n$ is an $n$-distance on $\R^q$. Its best constant is $K^*_n=2/n$ and is attained at any $(x_1,\ldots,x_n;z)$ such that $x_1\neq x_2=\cdots =x_n$ and $z=(x_1+x_2)/2$.
\end{proposition}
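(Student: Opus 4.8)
The plan is to check conditions (i)--(iii) and then to evaluate $K^*_n$. Condition (ii) is immediate, since the family of inner Chebyshev balls associated with $x_1,\ldots,x_n$ depends only on the set $\{x_1,\ldots,x_n\}$. For the rest it helps to record a structural description of $d$: if a cube $B$ of edge $\ell$ is an inner Chebyshev ball with a data point $x_a$ on one $j$-face and a data point $x_b$ on the opposite $j$-face, then $x_a$ and $x_b$ lie in $B$ in every coordinate, so $\ell=|x_b^{(j)}-x_a^{(j)}|=\|x_a-x_b\|_{\infty}$. Thus $\ell$ never exceeds the Chebyshev diameter of $\{x_1,\ldots,x_n\}$, and for each triple $(j,a,b)$ the radius of such a cube is forced; hence $d(x_1,\ldots,x_n)$ is the maximum of $|x_b^{(j)}-x_a^{(j)}|$ over the finitely many triples $(j,a,b)$ admitting an inner Chebyshev ball with $x_a,x_b$ on opposite $j$-faces. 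In particular a largest inner Chebyshev ball exists and $d$ is well defined.

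For condition (i): if $x_1=\cdots=x_n$ then the points on a pair of opposite faces of any inner Chebyshev ball must coincide, forcing edge $0$, so $d=0$. Conversely, if the $x_i$ are not all equal, pick distinct $x_a,x_b$ at minimal Chebyshev distance $e_0>0$ and a coordinate $j$ with $|x_a^{(j)}-x_b^{(j)}|=e_0$; the cube of edge $e_0$ with $x_a,x_b$ on opposite $j$-faces has empty interior, because a data point strictly inside it would be at Chebyshev distance $<e_0$ from $x_a$ by the triangle inequality for $\|\cdot\|_{\infty}$, contradicting minimality. Hence $d(x_1,\ldots,x_n)\ge e_0>0$.

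The core is condition (iii) together with $K^*_n=2/n$; both follow from the sharp estimate $\tfrac{n}{2}\,d(x_1,\ldots,x_n)\le\sum_{i=1}^n d(x_1,\ldots,x_n)_i^z$ (which gives (iii) since $2/n\le1$, and gives $K^*_n\le 2/n$). Write $C_i=(x_1,\ldots,x_n)_i^z$, and let $B$ be a largest inner Chebyshev ball for $(x_1,\ldots,x_n)$, of edge $\ell$, with $x_a$ on the bottom $j$-face and $x_b$ on the top $j$-face. If $z\in\mathrm{int}\,B$, set $\alpha=z^{(j)}-x_a^{(j)}$ and $\beta=x_b^{(j)}-z^{(j)}$, so $\alpha+\beta=\ell$; the key observation is that the sub-cube of $B$ of edge $\alpha$ with $x_a$ and $z$ on opposite $j$-faces is itself an inner Chebyshev ball (its interior lies in $\mathrm{int}\,B$, and $z$ sits on its boundary), whence $d(C_i)\ge\alpha$ for $i\neq a$, and symmetrically $d(C_i)\ge\beta$ for $i\neq b$; adding these gives $\sum_i d(C_i)\ge\alpha+\beta+(n-2)\max\{\alpha,\beta\}\ge\tfrac{n}{2}\ell$. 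If instead $z\notin\mathrm{int}\,B$, then $B$ is still an inner Chebyshev ball for each $C_i$ with $i\notin\{a,b\}$, so those $n-2$ terms are $\ge\ell$; this proves the estimate for $n\ge4$, while $n=2$ is the triangle inequality for $\|\cdot\|_{\infty}$ and $n=3$ needs a separate, hands-on argument using the coordinate along which $z$ leaves $B$, as in Proposition~\ref{prop:Llii}.

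The main obstacle is to make the sub-cube step unconditional: the edge-$\alpha$ sub-cube with $x_a$ and $z$ on opposite $j$-faces exists inside $B$ only when $x_a$ and $z$ are no farther apart in any other coordinate than in coordinate $j$, i.e.\ when $\|x_a-z\|_{\infty}=\alpha$, which can fail. When it does, one must choose the shrinking direction and the pair of opposite faces adaptively, and use the maximality of $B$ to keep the remaining data points out of the interior; this geometric bookkeeping, rather than a single inequality, is where the real effort lies, and the low-dimensional and small-$n$ configurations again have to be handled by hand. Once the upper bound is secured, the lower bound $K^*_n\ge 2/n$ is a short computation: taking $x_1\neq x_2=\cdots=x_n$ with $z=(x_1+x_2)/2$ (and, to be safe, $x_1-x_2$ with all coordinates nonzero) one finds $d(x_1,\ldots,x_n)=\|x_1-x_2\|_{\infty}$ and $d(C_i)=\tfrac12\|x_1-x_2\|_{\infty}$ for every $i$, so the simplex ratio equals $2/n$ and therefore $K^*_n=2/n$.
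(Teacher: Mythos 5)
Your overall strategy coincides with the paper's: split on whether $z$ lies in the interior of a largest inner Chebyshev ball $B$, prove the sharp estimate $\sum_i d(C_i)\ge\tfrac n2\,d(x_1,\ldots,x_n)$, and exhibit the midpoint configuration for the matching lower bound (your extremal computation, including the precaution that $x_1-x_2$ have no zero coordinate, is correct and in fact more careful than the paper's ``at any'' phrasing). But the proposal is not a proof: the two places where you say the ``real effort lies'' are exactly the places where the argument is missing, and the key step is false as stated. The sub-cube of $B$ of edge $\alpha=z^{(j)}-x_a^{(j)}$ with $x_a$ and $z$ on opposite $j$-faces need not exist: if $x_a$ sits at a corner of the bottom $j$-face and $z$ lies just above that face but far from $x_a$ in a lateral coordinate, then $\|x_a-z\|_{\infty}>\alpha$ and no cube of edge $\alpha$ contains both points. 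So the bound $d(C_i)\ge\alpha$ for $i\ne a$ is unjustified, and the identity $\alpha+\beta=\ell$ on which your $\tfrac n2\ell$ estimate rests is tied to a slicing of $B$ that does not produce inner balls.

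The repair --- which is what the paper does --- is to abandon the distinguished coordinate $j$ of $B$ and set $\ell_1=\|x_a-z\|_{\infty}$, $\ell_2=\|x_b-z\|_{\infty}$. Since $x_a,z\in B$ and $\ell_1\le\ell$, a cube of edge $\ell_1$ with $x_a$ and $z$ on opposite faces in a coordinate realizing $\|x_a-z\|_{\infty}$ can always be positioned entirely inside $B$ (in each coordinate, an interval of length $\ell_1$ containing both coordinates fits inside the corresponding side of $B$); its interior then lies in $\mathrm{int}\,B$ and avoids all data points, so it is an inner ball for every $C_i$ with $i\ne a$. Hence $d(C_i)\ge\ell_1$ for $i\ne a$ and $d(C_i)\ge\ell_2$ for $i\ne b$, and the triangle inequality $\ell_1+\ell_2\ge\|x_a-x_b\|_{\infty}=\ell$ replaces your $\alpha+\beta=\ell$; your summation then goes through verbatim for all $n\ge3$. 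Your second gap ($z\notin\mathrm{int}\,B$, where $(n-2)\ell$ falls short of $\tfrac n2\ell$ when $n=3$) is closed by the same device: taking the point $x_k$ of smallest Chebyshev norm (with $B$ centered at the origin) among $z$ and the non-diametral points, one obtains inner balls giving $d(C_a)\ge\|x_k-x_b\|_{\infty}$ and $d(C_b)\ge\|x_k-x_a\|_{\infty}$, whence $\sum_i d(C_i)\ge(n-1)\ell\ge\tfrac n2\ell$. Without these two repairs the proposal establishes neither the simplex inequality for $n=3$ nor the bound $K_n^*\le 2/n$ for any $n\ge3$.
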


\begin{proof}
If $n=2$, then $d$ is the Chebyshev distance on $\R^q$. Now suppose that $n\geq 3$ and let $x_1,\ldots,x_n,z\in\R^q$ be such that $|\{x_1,\ldots,x_n\}|\geq 2$. Modifying the coordinate axes if necessary, we can assume without loss of generality that a largest inner Chebyshev ball associated with the points $x_1,\ldots,x_n$ is given by the set
$$
B_r[0] ~=~ \{x\in\R^q:\|x\|_{\infty}\leq r\}
$$
for some $r>0$ and that $x_1$ lies on one face and $x_2$ lies on the opposite face of the ball. Thus, we have
$$
d(x_1,\ldots,x_n) ~=~ \|x_2-x_1\|_{\infty} ~=~ 2r
$$
and $\|x_i\|_{\infty}\geq r$ for $i=1,\ldots,n$. We then have two exclusive cases to consider.
\begin{itemize}
\item Suppose that $\|z\|_{\infty}<r$. Let $B_1$ (resp.\ $B_2$) be an inner Chebyshev ball associated with $x_1$ and $z$ (resp.\ $x_2$ and $z$), with edge length $\ell_1=\|x_1-z\|_{\infty}$ (resp.\ $\ell_2=\|x_2-z\|_{\infty}$). These balls can always be taken so that $B_1\cup B_2\subset B_r[0]$. We then have $2r =\|x_2-x_1\|_{\infty}\leq \ell_1+\ell_2$ and hence $\max\{\ell_1,\ell_2\}\geq r$. It follows that
$$
d(x_1,\ldots,x_n)_i^z ~\geq ~
\begin{cases}
\ell_2, & \text{if $i=1$},\\
\ell_1, & \text{if $i=2$},\\
\max\{\ell_1,\ell_2\}, & \text{otherwise}.
\end{cases}
$$
Therefore,
$$
\sum_{i=1}^nd(x_1,\ldots,x_n)_i^z ~\geq ~ 2r +(n-2)r ~=~ \frac{n}{2}{\,}d(x_1,\ldots,x_n).
$$
\item Suppose that $\|z\|_{\infty}\geq r$.
Set $x_{n+1}=z$ and let $k\in\{3,\ldots,n+1\}$ be such that $\|x_k\|_{\infty}\leq\|x_i\|_{\infty}$ for $i=3,\ldots,n+1$. Then, like in the previous case, we can see that
$$
d(x_1,\ldots,x_n)_i^z ~\geq ~
\begin{cases}
\|x_k-x_2\|_{\infty}{\,}, & \text{if $i=1$},\\
\|x_k-x_1\|_{\infty}{\,}, & \text{if $i=2$},\\
2r, & \text{otherwise}.
\end{cases}
$$
Therefore,
$$
\sum_{i=1}^nd(x_1,\ldots,x_n)_i^z ~\geq ~ 2r + (n-2)2r ~=~ (n-1){\,}d(x_1,\ldots,x_n).
$$
\end{itemize}
To summarize, we have
$$
K^*_n ~\leq ~ \max\{(n-1)^{-1},2/n\} ~=~ 2/n.
$$
To complete the proof, it suffices to observe that the best constant is attained at any tuple having the claimed properties.
\end{proof}

\section{Diameter of a largest inner Euclidean ball}

The following definition provides a Euclidean version of Definition~\ref{de:InnCheBall}. This definition seems more natural than the concept of the inner Chebyshev ball in the sense that it is independent of the location and inclination of the coordinate system. However, as we will see, it is much more difficult to investigate.

Recall that a (closed) Euclidean $q$-ball in $\R^q$ of radius $r>0$ centered at a point $c\in\R^q$ is defined by the set
$$
B_r[c] ~=~ \{x\in\R^q:|x-c|\leq r\},
$$
where $|\cdot|=\|\cdot\|_2$ is the Euclidean norm in $\R^q$.

\begin{definition}\label{de:DLIEB57}
Let $q\geq 2$ be an integer and let $x_1,\ldots,x_n$ be $n$ points in $\R^q$. We say that a Euclidean $q$-ball $B$ in $\R^q$ is \emph{an inner ball associated with $x_1,\ldots,x_n$} if it satisfies the following two conditions.
\begin{itemize}
\item The interior of $B$ contains none of the points $x_1,\ldots,x_n$.
\item Two of the points $x_1,\ldots,x_n$ are the endpoints of a diameter of $B$.
\end{itemize}
\end{definition}

In this section we show that the map $d\colon(\R^q)^n\to\R_+$ that carries $(x_1,\ldots,x_n)$ into the diameter of a largest inner ball associated with $x_1,\ldots,x_n$ is an $n$-distance on $\R^q$. Figure~\ref{fig:5pq2c} illustrates this map through an example based on $5$ points in the plane ($q=2$). We show that this $n$-distance is nonstandard if and only if $n\geq 3$. We also provide the value of $K^*_3$ and show that $K^*_n> 1/\pi$ for any $n$. The value of $K^*_n$ for any $n\geq 4$ remains unknown and finding this value seems to constitute an interesting open question.

\setlength{\unitlength}{7ex}
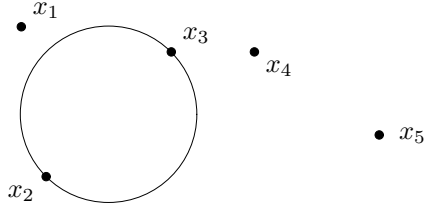
\begin{figure}[htbp]
\begin{center}
\begin{picture}(5.5,3)
\put(1.75,1.25){\circle{2.1213}}
\put(0.7,2.3){\circle*{0.1}}\put(1,0.5){\circle*{0.1}}\put(2.5,2){\circle*{0.1}}
\put(3.5,2){\circle*{0.1}}\put(5,1){\circle*{0.1}}
\put(1,2.5){\makebox(0,0){$x_1$}}\put(0.7,0.3){\makebox(0,0){$x_2$}}
\put(2.8,2.2){\makebox(0,0){$x_3$}}\put(3.8,1.8){\makebox(0,0){$x_4$}}
\put(5.4,1){\makebox(0,0){$x_5$}}
\end{picture}
\caption{A largest inner ball constructed from five points in $\R^2$}
\label{fig:5pq2c}
\end{center}
\end{figure}

Let us explore this problem carefully by first considering the cases when $n=2$ and $n=3$. If $n=2$, then $d$ is the usual Euclidean distance on $\R^q$, and so it is standard.

Let us now consider the case when $n=3$. Any pairwise distinct points $x_1,x_2,x_3\in\R^q$ are the vertices of a (possibly degenerate) triangle. It is then clear that if this triangle is right or acute, then
$$
d(x_1,x_2,x_3) ~=~ \max_{\{i,j\}\subset\{1,2,3\}}|x_i-x_j|.
$$
Now, if the triangle is obtuse, then $d(x_1,x_2,x_3)$ is necessarily the second highest side length of the triangle, that is,
$$
d(x_1,x_2,x_3) ~=~ \mathop{\mathrm{median}}\limits_{\{i,j\}\subset\{1,2,3\}}|x_i-x_j|.
$$
In both cases, it is then geometrically clear that
\begin{equation}\label{eq:65sd6afs}
d(x_1,x_2,x_3) ~\geq ~ \frac{1}{2}{\,}\max_{\{i,j\}\subset\{1,2,3\}}|x_i-x_j|{\,},\qquad x_1,x_2,x_3\in\R^q.
\end{equation}

Now, let $x_1,x_2,x_3,z\in\R^q$ satisfying $|\{x_1,x_2,x_3\}|\geq 2$ and suppose without loss of generality that $x_1$ and $x_2$ are the endpoints of a diameter of a largest inner ball associated with $x_1,x_2,x_3$. Using \eqref{eq:65sd6afs} and then the triangle inequality for $|\cdot|$, we obtain
\begin{eqnarray*}
\sum_{i=1}^3d(x_1,x_2,x_3)_i^z & \geq & \frac{1}{2}{\,}\big(|x_2-x_3|+|x_3-x_1|+|x_1-x_2|\big)\\
&\geq &  |x_1-x_2| ~=~  d(x_1,x_2,x_3),
\end{eqnarray*}
which shows that $d$ is a $3$-distance. To see that it is nonstandard, consider $x_1\neq x_2=x_3$ and $z=(x_1+x_2)/2$. We then obtain $K^*_3\geq 2/3\approx 0.667$.

We actually have the stronger inequality $K^*_3\geq\rho$, where
$$
\rho ~=~ \frac{1}{7}\sqrt{20+2\sqrt{2}} ~\approx ~ 0.683,
$$
which provides a better lower bound for $K^*_3$. Indeed, consider the points $x_1=(-1,0)$, $x_2=(1,0)$, and $x_3=(\sqrt{2}/2,\sqrt{2}/2)$ in $\R^2$. Let also $\varepsilon >0$ be sufficiently small and let $z_{\varepsilon}=(0,\sqrt{2}-1+\varepsilon)$. Thus, the points $x_1,x_2,x_3$ form a right triangle, with $\sphericalangle x_2x_1x_3=\frac{\pi}{8}$, and $z_{\varepsilon}$ is obtained by lifting to a height of $\varepsilon$ the $y$-intercept of the line joining $x_1$ and $x_3$ (see Figure~\ref{fig:cons446}). We then have
\begin{eqnarray*}
2 &=& |x_1-x_2| ~=~ d(x_1,x_2,x_3) ~\leq~ K^*_3{\,}\sum_{i=1}^3d(x_1,x_2,x_3)_i^{z_{\varepsilon}}\\
&\leq & K^*_3{\,}\big(|x_2-x_3|+2{\,}|x_1-z_{\varepsilon}|\big),
\end{eqnarray*}
for small values of $\varepsilon >0$, where
$$
|x_2-x_3|^2 ~=~ 2-\sqrt{2}\qquad\text{and}\qquad |x_1-z_{\varepsilon}|^2 ~=~ 1+(\sqrt{2}-1+\varepsilon)^2.
$$
This shows that $K^*_3\geq\rho$. Moreover, the value $\rho$ is not attained in this example.

\setlength{\unitlength}{4ex}
\begin{figure}[htbp]
\begin{center}
\begin{picture}(6,6)
\put(3,3){\circle{6}}
\put(0,3){\circle*{0.15}}\put(-0.6,3){\makebox(0,0){$x_1$}}
\put(6,3){\circle*{0.15}}\put(6.7,3){\makebox(0,0){$x_2$}}
\put(5.12,5.12){\circle*{0.15}}\put(5.9,5.12){\makebox(0,0){$x_3$}}
\put(3,4.6){\circle*{0.15}}\put(2.8,5.1){\makebox(0,0){$z_{\varepsilon}$}}
\drawline[0](0,3)(3,4.6)(5.12,5.12)(0,3)(6,3)(5.12,5.12)
\end{picture}
\caption{An example involving three points $x_1,x_2,x_3$ of $\R^2$}
\label{fig:cons446}
\end{center}
\end{figure}
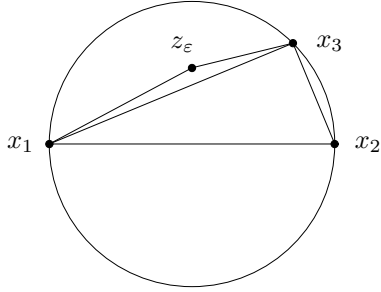

Although the value of $K^*_3$ seems difficult to obtain, most likely due to the fact that the map $d$ is rather discontinuous, we now prove that the value of $K^*_3$ is precisely equal to $\rho$.

\begin{proposition}\label{prop:K3rho}
We have $K^*_3=\rho$ and this value is not attained.
\end{proposition}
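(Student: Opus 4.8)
The plan is to establish the upper bound $K^*_3\le\rho$; the matching lower bound $K^*_3\ge\rho$, as well as the fact that $\rho$ is only a limit in the displayed construction, has just been obtained, and I will upgrade the latter to all configurations at the end. So let $x_1,x_2,x_3,z\in\R^q$ with $|\{x_1,x_2,x_3\}|\ge 2$; after relabelling we may assume that $x_1$ and $x_2$ are endpoints of a diameter of a largest inner ball associated with $x_1,x_2,x_3$, so that $d(x_1,x_2,x_3)=|x_1-x_2|$, and after rescaling we set $|x_1-x_2|=2$. The goal is to show
\[
S~:=~d(z,x_2,x_3)+d(x_1,z,x_3)+d(x_1,x_2,z)~\ge~\frac 2\rho{\,}.
\]
I will use repeatedly the elementary description of $d$ on three points recalled above the statement: for pairwise distinct $p,q,r$, the value $d(p,q,r)$ is the longest side of the triangle $pqr$ when that triangle is right or acute, and its median side when it is obtuse; hence $\tfrac12 L\le d(p,q,r)\le L$ with $L$ the longest side, and $d(p,q,r)=L$ as soon as the angle opposite the longest side is at most $\pi/2$. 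By Thales, the ball with a given pair as diameter is a valid inner ball for a triple exactly when the third point subtends an angle $\le\pi/2$ on that pair. Translating $d(x_1,x_2,x_3)=2$ through these remarks gives an explicit semialgebraic feasibility region for $x_3$: it forces $|x_3|\ge 1$ (so that the ball on $x_1x_2$ is valid), $|x_2-x_3|\le 2$ (the ball on $x_2x_3$, always valid once coordinates are chosen so that $x_3$ lies in a suitable half-plane, must not beat it), and $|x_1-x_3|\le 2$ whenever the ball on $x_1x_3$ is valid.

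Next I split on the position of $z$ with respect to the unit ball $B_1[0]$ (whose diameter is $x_1x_2$; write $\Gamma=\partial B_1[0]$). If $|z|\ge 1$, the ball on $x_1x_2$ is already a valid inner ball for $\{x_1,x_2,z\}$, so $d(x_1,x_2,z)\ge 2$; together with $d(x_1,z,x_3)\ge\tfrac12|x_1-x_3|$ and $d(z,x_2,x_3)\ge\tfrac12|x_2-x_3|$ from \eqref{eq:65sd6afs}, plus $|x_1-x_3|+|x_2-x_3|\ge|x_1-x_2|=2$, this yields $S\ge 2+1=3>2/\rho$, so this case is harmless — and, incidentally, it used no planarity. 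If $|z|<1$, then $z$ is interior to $B_1[0]$, so the triangle $x_1x_2z$ is obtuse at $z$ with $x_1x_2$ as its strict longest side, whence $d(x_1,x_2,z)=\max\{|x_1-z|,|x_2-z|\}$. Here I reduce to the plane $P$ of $x_1,x_2,x_3$ by replacing $z$ with its orthogonal projection $z'$ onto $P$: the foot of an orthogonal projection subtends on any segment of $P$ an angle at least as large as the original point does, so this move turns no auxiliary triangle from obtuse into acute and increases no side through $z$; hence it does not increase $d(x_1,z,x_3)$, $d(z,x_2,x_3)$ or $d(x_1,x_2,z)$, while keeping $|z'|\le|z|<1$.

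It then remains, in the plane and with $|z|<1$, to bound $S$ below. Writing $z=(s,t)$, $x_1=(-1,0)$, $x_2=(1,0)$ and (after a reflection) $x_3=(a,b)$ with $a,b\ge 0$, I lower-bound $d(x_1,z,x_3)$ and $d(z,x_2,x_3)$ by deciding, for each of the triangles $x_1zx_3$ and $zx_2x_3$, whether it is right/acute (its value is the longest side) or obtuse (its value is the median side). This cuts the feasible set into finitely many semialgebraic pieces on each of which $S$ is an explicit function of $(a,b,s,t)$. In several pieces elementary estimates (e.g.\ $d(z,x_2,x_3)\ge|x_2-z|$ combined with $d(x_1,x_2,z)=\max\{|x_1-z|,|x_2-z|\}\ge 1$) already give $S\ge 3$, so the minimization concentrates on the remaining pieces, namely those in which $zx_2x_3$ is obtuse with $d(z,x_2,x_3)=|x_2-x_3|$, close to the locus where $zx_2x_3$ becomes right; there it reduces to a one- or two-parameter problem. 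Carrying this out, the minimum of $S$ is attained at the boundary configuration with $x_3\in\Gamma$ (so $x_1x_2x_3$ is right at $x_3$) and $z$ pushed until $zx_2x_3$ becomes right at $x_3$ — i.e.\ the limit of the construction preceding the statement — where $\min S=\sqrt{2-\sqrt2}+2\sqrt{4-2\sqrt2}=\sqrt{2-\sqrt2}\,(1+2\sqrt2)$, and since $\bigl(2/(\sqrt{2-\sqrt2}\,(1+2\sqrt2))\bigr)^2=(20+2\sqrt2)/49=\rho^2$ we get $S\ge 2/\rho$, that is $K^*_3=\rho$. For non-attainment, $S=2/\rho$ would force exactly this boundary configuration; but there $zx_2x_3$ is right, so $d(z,x_2,x_3)$ equals the longest side $|x_2-z|$ rather than $|x_2-x_3|$, and one checks that $S>2/\rho$ strictly — hence $S>2/\rho$ for every genuine configuration and $\rho$ is not attained.

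I expect the main obstacle to be the analysis of the case $|z|<1$ in the plane: the right/acute versus obtuse dichotomy has to be tracked simultaneously for the two triangles $x_1zx_3$ and $zx_2x_3$ against the shape constraints on $x_3$, producing a sizeable case list, and on the surviving pieces the optimization that isolates the value $\rho=\tfrac17\sqrt{20+2\sqrt2}$ is an algebraically heavy computation — essentially a critical-point equation whose relevant root must be identified. A secondary point requiring care, precisely because $d$ is discontinuous, is the planar reduction itself, which rests on the monotonicity of subtended angles under orthogonal projection.
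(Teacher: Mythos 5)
Your overall architecture matches the truth of the matter: the normalization $|x_1-x_2|=2$, the dismissal of the case $|z|\geq 1$ (where $S\geq 3$ and hence $R_d\leq 2/3<\rho$), the reduction to the plane, the identification of the extremal configuration $x_3=(\sqrt{2}/2,\sqrt{2}/2)$, $z=(0,\sqrt{2}-1)$, the arithmetic $S^2=(2-\sqrt{2})(1+2\sqrt{2})^2=10-\sqrt{2}$ and $4/S^2=(20+2\sqrt{2})/49=\rho^2$, and the non-attainment mechanism (at the limit the triangle $zx_2x_3$ becomes right at $x_3$, so $d(z,x_2,x_3)$ jumps from the median side $|x_2-x_3|$ to the longest side $|x_2-z|$) are all correct; that last point is in fact spelled out more explicitly than in the paper, which only observes that $R_d$ strictly increases as $z$ approaches its projection onto the line through $x_1$ and $x_3$.

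There is nevertheless a genuine gap at the heart of the argument: the sentence ``Carrying this out, the minimum of $S$ is attained at the boundary configuration\dots'' is precisely the content of the proposition, and it is not carried out. You set up a minimization of $S$ over a four-parameter semialgebraic region partitioned by the right/obtuse dichotomy for the two triangles $x_1zx_3$ and $zx_2x_3$, state that ``several pieces'' are handled by $S\geq 3$, and then assert where the minimum over the remaining pieces lies; nothing shows that those pieces reduce to a one- or two-parameter problem, nor that the configuration you name is a global minimizer rather than a local one beaten inside another piece. The paper gets past this by a chain of explicit monotonicity reductions, each killing a degree of freedom before any calculus is attempted: it first forces $|x_{3,1}|<1$, then $x_3$ and $z$ on the same side of the line $x_1x_2$, then $z$ outside the triangle $x_1x_2x_3$ (each failure giving $R_d\leq 2/3$); it then replaces $x_3$ by the point $x_3'$ of the unit circle on the line $x_1x_3$, showing the ratio strictly increases; it then replaces $z$ by its orthogonal projection onto that line; only then does a genuine two-variable optimization in $(\alpha,t)$ remain, maximized at $(\pi/4,\sqrt{2}-1)$. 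Your plan needs an analogue of each of these reductions, or a complete enumeration of the pieces with an explicit bound on each, before the inequality $K_3^*\leq\rho$ is actually established. A secondary, fixable point: the planar reduction by orthogonal projection requires checking that none of the three triples can flip from obtuse to acute in a way that increases $d$ (it cannot -- the angles at the projected point and at the base vertices only move the right way -- but precisely because $d$ is discontinuous this should be recorded, whereas the paper simply takes $q=2$ without comment).
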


\begin{proof}
Let $x_1,x_2,x_3,z\in\R^q$ such that $|\{x_1,x_2,x_3\}|\geq 2$. We can assume without loss of generality that $q=2$ and that $x_1=(-1,0)$ and $x_2=(1,0)$ are the endpoints of a diameter of a largest inner ball associated with $x_1,x_2,x_3$. We then have $d(x_1,x_2,x_3)=|x_1-x_2|$ and $|x_3|\geq 1$.

Using \eqref{eq:65sd6afs} and then the triangle inequality for $|\cdot|$, we have
$$
d(z,x_2,x_3)+d(x_1,z,x_3) ~\geq ~ \frac{1}{2}(|x_2-x_3|+|x_1-x_3|) ~\geq ~\frac{1}{2}{\,}|x_1-x_2|.
$$
If $|z|\geq 1$, then $d(x_1,x_2,z)\geq |x_1-x_2|$ and hence $R_d(x_1,x_2,x_3;z)\leq\frac{2}{3}$. Thus, to ensure that $R_d(x_1,x_2,x_3;z)>\frac{2}{3}$, we have to assume $|z|<1$. In this case, we also have
\begin{equation}\label{eq:7xycv5}
d(x_1,x_2,z) ~=~ \max\{|x_1-z|,|x_2-z|\} ~\geq ~ \frac{1}{2}{\,}|x_1-x_2|.
\end{equation}

Now, let $x_{3,1}$ denote the first coordinate of $x_3$ and let us show that if $|x_{3,1}|\geq 1$, then $R_d(x_1,x_2,x_3;z)\leq\frac{2}{3}$. Suppose that $x_{3,1}\leq -1$ (the other case is similar). We have
\begin{eqnarray*}
d(z,x_2,x_3)+d(x_1,z,x_3) &\geq & |x_2-z| + \min\{|x_1-z|,|x_3-z|\}\\
&\geq & \min\{|x_1-x_2|,|x_2-x_3|\} ~=~ |x_1-x_2|.
\end{eqnarray*}
Combining this result with \eqref{eq:7xycv5}, we see that $R_d(x_1,x_2,x_3;z)\leq\frac{2}{3}$. Thus, to ensure that $R_d(x_1,x_2,x_3;z)>\frac{2}{3}$, we have to assume that $|x_{3,1}|< 1$.

Now, let $x_{3,2}$ (resp.\ $x_{0,2}$) denote the second coordinate of $x_3$ (resp.\ $z$). Let us show that if $x_{3,2}x_{0,2}\leq 0$, then $R_d(x_1,x_2,x_3;z)\leq\frac{2}{3}$. Since $x_1$ and $z$ are the endpoints of a diameter of an inner circle associated with $x_1,z,x_3$, we must have $d(x_1,z,x_3)\geq |x_1-z|$. We show similarly that $d(z,x_2,x_3)\geq |x_2-z|$. We then have
$$
d(z,x_2,x_3)+d(x_1,z,x_3) ~\geq ~ |x_1-x_2|.
$$
Combining this result with \eqref{eq:7xycv5}, we see that $R_d(x_1,x_2,x_3;z)\leq\frac{2}{3}$. Thus, to ensure that $R_d(x_1,x_2,x_3;z)>\frac{2}{3}$, we have to further assume that $x_{3,2}x_{0,2}>0$. Using a symmetry argument, we can assume that $x_{3,2}>0$ and $x_{0,2}>0$.

Now, let us show that if $z$ lies in the (closed) triangle with vertices $x_1,x_2,x_3$, then $R_d(x_1,x_2,x_3;z)\leq\frac{2}{3}$. If $\sphericalangle x_1zx_3\leq\frac{\pi}{2}$, then
$$
d(x_1,z,x_3) ~=~ \max\{|x_1-z|,|x_3-z|,|x_1-x_3|\} ~\geq ~|x_1-z|.
$$
If $\sphericalangle x_1zx_3 > \frac{\pi}{2}$, then
$$
d(x_1,z,x_3) ~=~ \mathrm{median}\{|x_1-z|,|x_3-z|,|x_1-x_3|\} ~\geq ~|x_1-z|.
$$
Similarly, we show that $d(z,x_2,x_3)\geq |x_2-z|$. We then have
$$
d(z,x_2,x_3)+d(x_1,z,x_3) ~\geq ~ |x_1-x_2|.
$$
Combining this result with \eqref{eq:7xycv5}, we see that $R_d(x_1,x_2,x_3;z)\leq\frac{2}{3}$. Thus, to ensure that $R_d(x_1,x_2,x_3;z)>\frac{2}{3}$, we have to further assume that $z$ does not lie in the triangle with vertices $x_1,x_2,x_3$.

Now, let us show that if $|x_3|>1$, then there is $x'_3\in\R^2$, with $|x'_3|=1$, such that $R_d(x_1,x_2,x_3';z)>R_d(x_1,x_2,x_3;z)$. Suppose that $|x_1-z|\leq |x_2-z|$ (the other case is similar) and let $x'_3\in\R^2$ the unique point on the line through $x_1$ and $x_3$ such that $|x'_3|=1$. We observe that $\sphericalangle x_1zx'_3 >\frac{\pi}{2}$ and hence $\sphericalangle x_1zx_3 >\frac{\pi}{2}$, which implies that
$$
d(x_1,z,x_3) ~\geq ~ d(x_1,z,x'_3).
$$
We also observe that $|x'_3-z|<|x_3-z|$ and $|x_2-x'_3|<|x_2-x_3|$ and $\sphericalangle x_2x'_3z >\frac{\pi}{2}$. Thus, if $\sphericalangle zx_3x_2 >\frac{\pi}{2}$, then
\begin{eqnarray*}
d(z,x_2,x_3) &=& \max\{|x_2-x_3|,|x_3-z|\} ~>~ \max\{|x_2-x'_3|,|x'_3-z|\}\\
&=& d(z,x_2,x'_3).
\end{eqnarray*}
If $\sphericalangle zx_3x_2 \leq\frac{\pi}{2}$, then
$$
d(z,x_2,x_3)~\geq ~ |x_2-z| ~>~ \max\{|x_2-x'_3|,|x'_3-z|\} ~=~ d(z,x_2,x'_3).
$$

We then have
$$
d(z,x_2,x_3)+d(x_1,z,x_3) ~>~ d(z,x_2,x'_3)+d(x_1,z,x'_3),
$$
that is, $R_d(x_1,x_2,x_3';z)>R_d(x_1,x_2,x_3;z)$.

Thus, to approach the supremum of $R_d(x_1,x_2,x_3;z)$, we have to assume that $|x_3|=1$. We then have
\begin{multline*}
d(z,x_2,x_3)+d(x_1,z,x_3)+d(x_1,x_2,z) \\
=~ \max\{|x_2-x_3|,|x_3-z|\}+\max\{|x_1-z|,|x_3-z|\}+\max\{|x_1-z|,|x_2-z|\}.
\end{multline*}
Let $z'$ be the orthogonal projection of $z$ onto the line through $x_1$ and $x_3$. We observe that the value of $R_d(x_1,x_2,x_3;z)$ strictly increases as $z$ moves closer to $z'$. It follows that the value $K^*_3$ cannot be attained.

Now, writing $x_3=(\cos\alpha,\sin\alpha)$ for some $\alpha\in\left]0,\pi\right[$ and $z'=tx_1+(1-t)x_3$ for some $t\in\left]0,1\right[$, we have $|x_1-z'|=2(1-t)\cos\frac{\alpha}{2}$, $|x_3-z'|=2t\cos\frac{\alpha}{2}$, $|x_2-x_3|=2\sin\frac{\alpha}{2}$, and
$$
|x_2-z'|^2 ~=~ |x_2-x_3|^2 + |x_3-z'|^2.
$$
It is then a basic calculus exercise to see that the map $(\alpha,t)\mapsto R_d(x_1,x_2,x_3;z')$ has a global maximum at $(\alpha,t)=(\pi/4,\sqrt{2}-1)$ with value $K^*_3=\rho$. This corresponds to $x_3=(\sqrt{2}/2,\sqrt{2}/2)$ and $z'=(0,\sqrt{2}-1)$, that is $|x_1-z'|=|x_2-z'|$.
\end{proof}

We now consider the general case for any $n\geq 2$ and show in a rather tricky way that the map $d$ is an $n$-distance. We first consider a lemma that uses the concept of the minimum spanning tree of a graph. For background on this concept, see, e.g., \cite{Cie05,WuCha04}.

\begin{lemma}\label{lemma:MinSpanTree}
Let $x_1,\ldots,x_n\in\R^q$. Let $T=(V,E)$ be a minimum spanning tree of the complete Euclidean graph whose vertex set is $V=\{x_1,\ldots,x_n\}$. Then every edge in $E$ is a diameter of an inner ball associated with $x_1,\ldots,x_n$.
\end{lemma}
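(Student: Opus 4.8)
The plan is to combine the classical characterization of the Euclidean ball having a prescribed diameter (Thales' inscribed‑angle condition) with the exchange property of minimum spanning trees.

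First I would fix an edge $e=\{a,b\}$ of $T$ (so that $a\neq b$) and let $B=B_r[c]$ be the Euclidean $q$-ball with $c=\frac{1}{2}(a+b)$ and $r=\frac{1}{2}|a-b|$, i.e.\ the ball admitting the segment $[a,b]$ as a diameter. Condition (2) of Definition~\ref{de:DLIEB57} then holds for $B$ by construction, so the whole task reduces to checking that the interior of $B$ contains none of the points $x_1,\ldots,x_n$. Here the relevant elementary fact is the identity
$$
\langle x_k-a,\,x_k-b\rangle ~=~ |x_k-c|^2-r^2 ,
$$
which shows that $x_k$ lies in the interior of $B$ exactly when $\langle x_k-a,\,x_k-b\rangle<0$; combining this with the identity $|a-b|^2=|x_k-a|^2+|x_k-b|^2-2\langle x_k-a,\,x_k-b\rangle$ and the trivial remark that an interior point of $B$ is distinct from $a$ and from $b$, I obtain that $|x_k-a|<|a-b|$ and $|x_k-b|<|a-b|$ whenever $x_k$ lies in the interior of $B$.

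The second ingredient is the minimality of $T$. Deleting $e$ splits the tree $T$ into exactly two subtrees; let $S$ (resp.\ $S'$) be the vertex set of the one containing $a$ (resp.\ $b$). For any edge $f=\{x,y\}$ of the complete Euclidean graph with $x\in S$ and $y\in S'$, removing $e$ from $T$ and inserting $f$ again yields a spanning tree, whose total length is therefore no smaller than that of $T$; hence $|x-y|\geq|a-b|$. Taking $f=\{a,x_k\}$ when $x_k\in S'$ gives $|x_k-a|\geq|a-b|$, and taking $f=\{b,x_k\}$ when $x_k\in S$ gives $|x_k-b|\geq|a-b|$ (while $x_k=a$ or $x_k=b$ simply puts $x_k$ on the boundary of $B$). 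In every case this contradicts the conclusion of the previous step that an interior point of $B$ is strictly closer to both $a$ and $b$ than $a$ is to $b$. Hence no $x_k$ lies in the interior of $B$, so $B$ is an inner ball associated with $x_1,\ldots,x_n$ and $e$ is one of its diameters.

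I do not expect a serious obstacle: the argument is short once the two ingredients are assembled, and the potentially delicate degenerate configurations (three collinear points, or a point lying on the open segment $\left]a,b\right[$) are absorbed automatically, since the distance comparisons above come with strict inequalities. The only bookkeeping remark is that one may assume the $x_i$ pairwise distinct, because coincident points merely collapse vertices of the complete graph and do not affect which edges occur in $T$.
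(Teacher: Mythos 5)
Your proof is correct and takes essentially the same route as the paper's: both argue that a point $x_k$ interior to the ball with diameter $\{x_i,x_j\}$ would be strictly closer to each endpoint than $|x_i-x_j|$ (Thales), and then derive a contradiction with minimality of $T$ by swapping $\{x_i,x_j\}$ for $\{x_i,x_k\}$ or $\{x_j,x_k\}$ across the cut created by deleting that edge. Your write-up merely makes explicit the inner-product computation and the cut property that the paper leaves implicit.
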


\begin{proof}
For any $i,j\in\{1,\ldots,n\}$, denote the edge $\{x_i,x_j\}$ by $e_{i,j}$. Suppose that there exist $i,j\in\{1,\ldots,n\}$ such that the edge $e_{i,j}$ is in $E$ and is not a diameter of an inner ball associated with $x_1,\ldots,x_n$. This means that there exists $k\in\{1,\ldots,n\}\setminus\{i,j\}$ such that $x_k$ is in the interior of the ball whose $e_{i,j}$ is a diameter. It follows that either
$$
(V,E\cup\{e_{i,k}\}\setminus\{e_{i,j}\})\quad\text{or}\quad (V,E\cup\{e_{j,k}\}\setminus\{e_{i,j}\})
$$
is a spanning tree that is shorter than $T$. This contradicts the definition of $T$.
\end{proof}

\begin{proposition}\label{prop:LIB-nD3}
The map $d\colon(\R^q)^n\to\R_+$ that carries $(x_1,\ldots,x_n)$ into the diameter of a largest inner ball associated with $x_1,\ldots,x_n$ is an $n$-distance on $\R^q$.
\end{proposition}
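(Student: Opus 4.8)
The plan is to check conditions (i)--(iii) in turn, the only real work being the simplex inequality. Conditions (i) and (ii) are essentially immediate: $d$ is visibly symmetric, and both the existence of a largest inner ball (the collection of inner ball diameters being finite, and nonempty by Lemma~\ref{lemma:MinSpanTree}) and the equivalence $d(x_1,\ldots,x_n)=0\Leftrightarrow x_1=\cdots=x_n$ follow from Lemma~\ref{lemma:MinSpanTree}, since a minimum spanning tree of $\{x_1,\ldots,x_n\}$ has an edge of positive length exactly when the points are not all equal, and such an edge is an inner ball diameter. For the simplex inequality, fix $x_1,\ldots,x_n,z\in\R^q$ and, for $i=1,\ldots,n$, let $Y_i$ be the $n$-point system obtained from $(x_1,\ldots,x_n)$ by replacing $x_i$ with $z$, so that $d(x_1,\ldots,x_n)_i^z=d(Y_i)$; we must show $d(x_1,\ldots,x_n)\le\sum_{i=1}^n d(Y_i)$. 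This is trivial if $n=2$ (then $d$ is the Euclidean distance), if all the $x_i$ coincide (the left side is $0$), or if $z=x_k$ for some $k$ (then $Y_k$ has the same underlying set as $(x_1,\ldots,x_n)$, so the $k$-th summand already equals the left side). It is also easy if $|\{x_1,\ldots,x_n\}|=2$, say $\{x_1,\ldots,x_n\}=\{a,b\}$ with $d(x_1,\ldots,x_n)=|a-b|$: since $n\ge 3$, one of $a,b$, say $b$, occurs among the $x_i$ at least twice, so for each of the (at least two) indices $i$ with $x_i=b$ the system $Y_i$ still contains $a$ and $b$ and also $z$, whence $d(Y_i)\ge\tfrac12|a-b|$ by \eqref{eq:65sd6afs}, and summing two such terms already yields $|a-b|$. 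So from now on assume $n\ge 3$, $z\notin\{x_1,\ldots,x_n\}$, $|\{x_1,\ldots,x_n\}|\ge 3$, and, relabeling if needed, that a largest inner ball associated with $V=\{x_1,\ldots,x_n\}$ has $x_1$ and $x_2$ as the endpoints of a diameter, so $d(x_1,\ldots,x_n)=|x_1-x_2|$.

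The key device is to pass from $V$ to the enlarged set $W=V\cup\{z\}$, which consists of $n+1$ distinct points. Let $T$ be a minimum spanning tree of the complete Euclidean graph on $W$, and let $P$ be the simple path in $T$ from $x_1$ to $x_2$, with consecutive edges $e_1,\ldots,e_m$; since $P$ has at most $n+1$ vertices, $m\le n$. The triangle inequality for $|\cdot|$ along $P$ gives
$$
d(x_1,\ldots,x_n)~=~|x_1-x_2|~\le~\sum_{j=1}^m|e_j|.
$$
By Lemma~\ref{lemma:MinSpanTree} applied to $W$, each $e_j$ is a diameter of an inner ball $B_j$ associated with $W$; as the interior of $B_j$ contains no point of $W$, it contains no point of any subsystem of $W$, so $B_j$ is also an inner ball associated with $Y_i$ as soon as both endpoints of $e_j$ lie in $Y_i$, that is, as soon as $x_i\notin e_j$. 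In that case $|e_j|\le d(Y_i)$.

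It remains to assign the edges $e_1,\ldots,e_m$ to pairwise distinct indices $\sigma(1),\ldots,\sigma(m)\in\{1,\ldots,n\}$ with $x_{\sigma(j)}\notin e_j$ for every $j$; this is a system of distinct representatives for the sets $A_j=\{i:x_i\notin e_j\}$, and I would obtain it from Hall's marriage theorem. For nonempty $J\subseteq\{1,\ldots,m\}$, the complement of $\bigcup_{j\in J}A_j$ in $\{1,\ldots,n\}$ is $\{i:x_i\in\bigcap_{j\in J}e_j\}$; because $P$ is a simple path, no vertex of $T$ lies on three of its edges, so $\bigcap_{j\in J}e_j$ is empty when $|J|\ge 3$, is at most a single vertex when $|J|=2$, and is a two-element set when $|J|=1$. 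Using $|\{x_1,\ldots,x_n\}|\ge 3$ to bound how many of the $x_i$ can lie in such a set (and $m\le n$ when $|J|\ge 3$), one checks $\bigl|\bigcup_{j\in J}A_j\bigr|\ge|J|$ in each case, so Hall's condition holds. With such a $\sigma$ in hand, injectivity of $\sigma$ and $d\ge 0$ give
$$
d(x_1,\ldots,x_n)~\le~\sum_{j=1}^m|e_j|~\le~\sum_{j=1}^m d(Y_{\sigma(j)})~\le~\sum_{i=1}^n d(Y_i)~=~\sum_{i=1}^n d(x_1,\ldots,x_n)_i^z,
$$
which completes the proof. I expect the matching step to be the main obstacle, and the reason for the enlargement trick: running the same argument on a spanning tree of $V$ itself fails, since an inner ball for $V$ need not avoid $z$ and hence need not be an inner ball for $Y_i$; it is the passage to $W$ (forcing each $B_j$ to avoid $z$), together with the path/Hall bookkeeping and Lemma~\ref{lemma:MinSpanTree}, that makes everything fit, and one must also dispose separately of the low-cardinality configurations where the path $P$ is too short to carry enough edges.
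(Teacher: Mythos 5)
Your proof is correct, and it shares the paper's central device --- augment the point set with $z$, take a minimum spanning tree of the enlarged set, extract the $x_1$--$x_2$ path, and use Lemma~\ref{lemma:MinSpanTree} to turn each path edge into a lower bound on some $d(x_1,\ldots,x_n)_i^z$ --- but the final accounting is genuinely different. The paper first splits on whether $z$ lies in the interior of the largest inner ball $B$ (if not, $B$ remains inner for each $Y_i$ with $i\ge 3$ and the inequality is immediate), and in the remaining case charges the path by the coarse estimate $(n-2)\ell_1+2\ell_3\ge\sum_{e\in P}\ell_e$, which requires $n\ge 4$ and therefore leans on the separate $n=3$ argument given earlier in the section. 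You instead produce, via Hall's marriage theorem, an injective assignment of path edges to indices $i$ with $x_i$ not an endpoint, so each edge is charged to a distinct summand and $\sum_j|e_j|\le\sum_i d(Y_i)$ drops out directly; this treats all $n\ge 3$ uniformly and eliminates the case split on the position of $z$, at the price of having to dispose separately of the degenerate configurations ($z\in\{x_1,\ldots,x_n\}$ and $|\{x_1,\ldots,x_n\}|=2$) where Hall's condition genuinely fails --- and you do dispose of them correctly, the two-value case via \eqref{eq:65sd6afs} applied to the three-element underlying set of $Y_i$. Your verification of Hall's condition is sound: the complement of $\bigcup_{j\in J}A_j$ is the index set of at most one vertex when $|J|=2$ and is empty when $|J|\ge 3$, while $|\{x_1,\ldots,x_n\}|\ge 3$ caps the multiplicity of any single point at $n-2$. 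One cosmetic slip: $W$ need not have $n+1$ distinct points when the $x_i$ repeat, but you only use $m\le n$ and the index-versus-point bookkeeping, both of which survive repetitions.
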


\begin{proof}
We have seen that the result holds for $n=2$ and $n=3$. We can therefore assume that $n\geq 4$. Let $x_1,\ldots,x_n,z\in\R^q$ satisfying $|\{x_1,\ldots,x_n\}|\geq 2$ and suppose without loss of generality that $x_1$ and $x_2$ are the endpoints of a diameter of a largest inner ball $B$ associated with $x_1,\ldots,x_n$. Thus, we have $d(x_1,\ldots,x_n)=|x_1-x_2|$. We then have two cases to consider.
\begin{itemize}
\item Assume that $z$ is not in the interior of $B$. In this case, we simply have
\begin{eqnarray*}
\sum_{i=1}^nd(x_1,\ldots,x_n)_i^z &\geq & \sum_{i=3}^nd(x_1,\ldots,x_n)_i^z ~\geq ~ (n-2){\,}|x_1-x_2|\\
& \geq & d(x_1,\ldots,x_n).
\end{eqnarray*}
\item Assume that $z$ is in the interior of $B$. Set $x_{n+1}=z$ and let $\mathcal{G}$ be the complete Euclidean graph whose vertex set is $\{x_1,\ldots,x_{n+1}\}$. We can assume that this graph has at least $4$ distinct vertices (otherwise, the problem is trivial). By Lemma~\ref{lemma:MinSpanTree}, there is a simple path $P$ in $\mathcal{G}$ connecting $x_1$ to $x_2$ with the property that every edge in $P$ is a diameter of an inner ball associated with $x_1,\ldots,x_{n+1}$. Let $\ell_1$ be the length of a longest edge in $P$ and let $\ell_3$ be the length of a third longest edge in $P$. Let $j,k\in\{1,\ldots,n+1\}$ such that $\{x_j,x_k\}\in P$ and $|x_j-x_k|=\ell_1$. Then
$$
d(x_1,\ldots,x_n)_i^z ~\geq ~
\begin{cases}
\ell_1, & \text{if $i\notin\{j,k\}$}\\
\ell_3, & \text{otherwise}.
\end{cases}
$$
We then have
\begin{eqnarray*}
\sum_{i=1}^nd(x_1,\ldots,x_n)_i^z &\geq & (n-2)\ell_1+2\ell_3 ~\geq ~ \sum_{e\in P}\ell_e ~> ~ |x_1-x_2|\\
&=& d(x_1,\ldots,x_n),
\end{eqnarray*}
where $\ell_e$ denotes the length of $e$.
\end{itemize}
To summarize, we have shown that the simplex inequality holds in both cases. This completes the proof.
\end{proof}

\begin{remark}
The last two inequalities in the proof of Proposition~\ref{prop:LIB-nD3} are actually rather fine. To illustrate this observation, in the Euclidean plane $\R^2$ take the points $x_1=(-1,0)$, $x_2=(1,0)$, $x_3=(-1,\varepsilon)$, $x_4=(1,\varepsilon)$, and $z=(0,\varepsilon)$ for some $\varepsilon >0$. Then we have
$$
d(x_1,x_2,x_3,x_4)_i^z ~=~ \sqrt{1+\varepsilon^2},\quad i=1,\ldots,4
$$
and
$$
2\ell_1+2\ell_3 ~=~ \sum_{e\in P}\ell_e ~=~ 2(1+\varepsilon) ~>~ 2 ~=~ \|x_1-x_2\|_2.
$$
\end{remark}

As mentioned in the beginning on this section, the best constant $K^*_n$ associated with the $n$-distance defined in Proposition~\ref{prop:LIB-nD3} is not known when $n\geq 4$ and finding its value seems to be a challenging problem. However, the next proposition provides a lower bound for $K^*_n$, which reveals the surprising observation that the inequality $K^*_n>1/\pi$ always holds. This shows in particular that this $n$-distance is nonstandard when $n\geq 3$. For any integer $p\geq 0$, let $T_p$ be the $p$th degree Chebyshev polynomial of the first kind.

\begin{proposition}\label{prop:knLower452}
Let $n\geq 4$ be an integer and let $p=\left\lfloor n/2\right\rfloor -1$. Then, we have
$$
K^*_n ~\geq ~ \frac{1}{n\lambda_n} ~>~ \frac{1}{n\,\sin\frac{\pi}{2p+4}} ~>~ \frac{2p+4}{n\pi} ~>~ \frac{1}{\pi} ~\approx ~0.318{\,},
$$
where $\lambda_n$ is the unique solution to the equation $$T_p(\sqrt{1-x^2}) ~=~ 2x$$ lying in the open interval $]0,\sin\frac{\pi}{2p+4}[$. Moreover, we have
$$
\lim_{n\to\infty}\frac{1}{n\lambda_n} ~=~ \frac{1}{\pi}{\,}.
$$
\end{proposition}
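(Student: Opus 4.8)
The plan is to separate the statement into an elementary part — the well-definedness of $\lambda_n$, the displayed chain of inequalities, and the limit — and the substantive part, namely the bound $K^*_n\geq\frac1{n\lambda_n}$, concentrating the effort on the latter. For the elementary part the key move is the substitution $x=\sin\theta$, with $\theta\in\left]0,\frac{\pi}{2p+4}\right[$, so that $\sqrt{1-x^2}=\cos\theta$ and $T_p(\sqrt{1-x^2})=T_p(\cos\theta)=\cos(p\theta)$, turning the defining equation into $\cos(p\theta)=2\sin\theta$. On that interval $p\theta$ runs over $\left]0,\frac{p\pi}{2p+4}\right[=\left]0,\frac\pi2-\frac\pi{p+2}\right[$, so $f(\theta)=\cos(p\theta)-2\sin\theta$ is strictly decreasing (recall $p\geq 1$ since $n\geq 4$), with $f(0)=1>0$ and $f\!\left(\frac{\pi}{2p+4}\right)=2\sin\frac\pi{2p+4}\!\left(\cos\frac\pi{2p+4}-1\right)<0$; hence $f$ has a unique zero $\theta_n$ there, $\lambda_n=\sin\theta_n$ is well-defined, $0<\lambda_n<\sin\frac\pi{2p+4}$, and (since $x\mapsto\arcsin x$ is a bijection onto the relevant interval) $\lambda_n$ is the unique solution of $T_p(\sqrt{1-x^2})=2x$ in $\left]0,\sin\frac\pi{2p+4}\right[$. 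The displayed chain is then immediate: the first inequality is $\lambda_n<\sin\frac\pi{2p+4}$, the second is $\sin t<t$ for $t>0$, and the third is $2p+4=2\lfloor n/2\rfloor+2\geq n+1>n$. For the limit, $0<\theta_n<\frac\pi{2p+4}\to 0$ forces $\theta_n\to 0$, hence $2\sin\theta_n\to 0$, hence $\cos(p\theta_n)\to 0$; as $0<p\theta_n<\frac\pi2$ this gives $p\theta_n\to\frac\pi2$, and therefore $n\lambda_n=n\sin\theta_n\sim n\theta_n=\frac{n}{p}\,(p\theta_n)\to 2\cdot\frac\pi2=\pi$.

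For the bound $K^*_n\geq\frac1{n\lambda_n}$ I would exhibit a family of configurations in $\R^2$ whose simplex ratio tends to $\frac1{n\lambda_n}$, in the spirit of the construction used above for $n=3$. Take two ``anchor'' points $x_1=(-1,0)$ and $x_2=(1,0)$; place the remaining $n-2$ points on the unit circle at angles built from a parameter $\theta$ (so that the pertinent chords among them have length $2\sin\theta$, with $p=\lfloor n/2\rfloor-1$ governing how many of them are ``active''); and place $z=(0,s)$ on the $y$-axis at a height $s\in\left]0,1\right[$ still to be chosen. Since all $n$ points lie on the unit circle and $x_1,x_2$ are antipodal, the open unit disc contains none of them, so $B_1[0]$ is an inner ball, and since the diameter of the whole set equals $2$ we get $d(x_1,\dots,x_n)=2$.

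The crux is to evaluate $\sum_{i=1}^n d(x_1,\dots,x_n)_i^z$. For this I would use the following description of $d$ on finite subsets of a circle: the diameter of the largest inner ball is the longest chord joining two of the points one of whose two arcs is free of the others, and adjoining the interior point $z$ destroys a chord $[x_i,x_j]$ (respectively $[z,x_j]$) precisely when $z$ (respectively some $x_k$) lies strictly inside the circle having that chord as a diameter. A case analysis — according to whether the point replaced by $z$ is an anchor or a circular point, and of which remaining point plays the role of ``opposite endpoint'' along the freed arc — should then show that, for the critical height $s$, the point $z$ kills every surviving chord of length greater than $2\sin\theta$ except those produced by the two anchor replacements (whose lengths are controlled), while the chords that do survive have length exactly $2\sin\theta$; arranging the bookkeeping so that $\sum_{i=1}^n d(x_1,\dots,x_n)_i^z\to 2n\sin\theta_n$ as the parameters approach the critical ones then yields $R_d(x_1,\dots,x_n;z)\to\frac{2}{2n\sin\theta_n}=\frac1{n\lambda_n}$, whence $K^*_n\geq\frac1{n\lambda_n}$; as for $n=3$, this ratio is approached but not attained, which is why one passes to a limit. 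The relation that fixes the critical configuration — the height at which $z$ just barely blocks the long chords created when an interior circular point is removed, balanced against the elementary chord length $2\sin\theta$ — is exactly $\cos(p\theta)=2\sin\theta$, which is where $\lambda_n=\sin\theta_n$ and the Chebyshev polynomial $T_p$ enter. The main obstacle is this whole last step: pinning down the extremal configuration (above all the exact height of $z$), verifying that $z$ blocks all and only the long chords there, and carrying out the delicate case analysis so that the $n$ lengths $d(x_1,\dots,x_n)_i^z$ really do collapse in the limit — exactly the sort of tricky, distance-specific computation the introduction warns about. By comparison the rest (well-definedness of $\lambda_n$, the inequality chain, the limit) is routine.
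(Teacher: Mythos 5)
Your treatment of the elementary half of the statement is complete and correct: the substitution $x=\sin\theta$ turning the defining equation into $\cos(p\theta)=2\sin\theta$, the monotonicity argument giving existence and uniqueness of $\lambda_n$ in $\left]0,\sin\frac{\pi}{2p+4}\right[$, the three easy inequalities, and the limit via $p\theta_n\to\frac{\pi}{2}$ are all fine, and indeed more detailed than the paper, which dismisses these points as a ``basic exercise'' plus a geometric length argument. The genuine gap is the substantive claim $K^*_n\geq 1/(n\lambda_n)$: you reduce it to a case analysis that ``should'' work but never carry it out, and the configuration you sketch differs from the paper's in two ways that matter. First, the paper does not put $z$ on the $y$-axis; it sets $z=(x_2+x_n)/2$, the midpoint of the chord $[x_2,x_n]$, and the entire purpose of the relation $T_p(\sqrt{1-\lambda^2})=2\lambda$ is to make that last chord have length $4\lambda$, exactly twice the common length $2\lambda$ of the other consecutive chords, so that $|z-x_2|=|z-x_n|=2\lambda$. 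Second, and crucially, the circle points are \emph{duplicated}: $x_{2i}=x_{2i-1}$ for $2\leq i\leq p$, and $x_n=x_{n-1}$ (or $x_n=x_{n-1}=x_{n-2}$ for odd $n$), leaving only $p+2$ distinct locations. Your phrase about $p$ ``governing how many of them are active'' gestures at this but does not state it, and the duplication is exactly what makes the sum collapse: replacing any single $x_i$ by $z$ leaves the set of distinct circle points unchanged and merely adjoins the interior point $z$, after which the surviving inner balls all have diameter $2\lambda$ and $\sum_{i=1}^n d(x_1,\dots,x_n)_i^z=2n\lambda$. If instead the $n-2$ circle points occupied distinct positions, removing one of them would merge its two neighbouring arcs and expose a free chord of length greater than $2\lambda$ on the far side of the circle, where $z$ cannot block it, and the ratio would drop below $1/(n\lambda_n)$.

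A secondary point: your expectation that the bound is only approached in a limit is not how the paper argues --- it claims the ratio $1/(n\lambda_n)$ is attained exactly by the single configuration above --- although for $p=1$ (i.e.\ $n\in\{4,5\}$) the configuration is genuinely borderline, since by Thales the unique circle point lies exactly on the boundary of the ball with diameter $[x_1,z]$, so that a small perturbation of $z$ (or a limiting argument of the kind you anticipate) is really needed there. In any case, the heart of the proposition is precisely the construction and the verification that each $d(x_1,\dots,x_n)_i^z$ equals $2\lambda$, and since that is the part left open, the proposal as it stands does not establish the lower bound.
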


\begin{proof}
Consider the points $x_1=(-1,0)$ and $x_2=(1,0)$ in $\R^2$. Let also $x_3,\ldots,x_n$ placed clockwise on the upper part of the unit circle so that
\begin{itemize}
\item $x_{2i}=x_{2i-1}$ for $2\leq i\leq p$,
\item $x_n=x_{n-1}$ if $n$ is even and $x_n=x_{n-1}=x_{n-2}$ if $n$ is odd,
\item $|x_4-x_1|=\frac{1}{2}{\,}|x_n-x_2|$ and $|x_{2i+2}-x_{2i}|=|x_4-x_1|$ for $2\leq i\leq p$.
\end{itemize}
We then have $p+2$ distinct points. Let also $z=(x_2+x_n)/2$. Figure~\ref{fig:cons446zz} illustrates this construction for $n=9$ (which implies $p=3$).

\setlength{\unitlength}{4ex}
\begin{figure}[htbp]
\begin{center}
\begin{picture}(6,6.2)
\put(3,3){\circle{6}}
\put(0,3){\circle*{0.15}}\put(-0.6,3){\makebox(0,0){$x_1$}}
\put(6,3){\circle*{0.15}}\put(6.7,3){\makebox(0,0){$x_2$}}
\put(0.548,4.73){\circle*{0.15}}\put(-0.6,4.73){\makebox(0,0){$x_3=x_4$}}
\put(1.99,5.83){\circle*{0.15}}\put(0.9,6.1){\makebox(0,0){$x_5=x_6$}}
\put(3.81,5.89){\circle*{0.15}}\put(5.5,6.1){\makebox(0,0){$x_7=x_8=x_9$}}
\put(4.90,4.44){\circle*{0.15}}\put(4.5,4.46){\makebox(0,0){$z$}}
\drawline[0](0,3)(6,3)(3.81,5.89)(3,3)(1.99,5.83)(0.548,4.73)(3,3)
\drawline[0](0,3)(0.548,4.73)
\drawline[0](1.99,5.83)(3.81,5.89)
\end{picture}
\caption{An example with $n=9$}
\label{fig:cons446zz}
\end{center}
\end{figure}
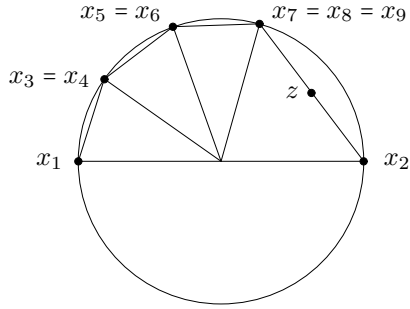

The distances between two consecutive points on the circle can be computed as follows. Setting $\lambda =\frac{1}{2}|x_4-x_1|=\sin\frac{\alpha}{2}$, where $\alpha = \sphericalangle x_10x_4$, we necessarily have $\alpha <\frac{\pi}{p+2}$ and hence $\lambda <\sin\frac{\pi}{2p+4}$. We then obtain
$$
2\lambda  ~=~ \frac{1}{2}{\,}|x_n-x_2| ~=~ \sin\frac{\pi-p\alpha}{2} ~=~ \cos\frac{p\alpha}{2} ~=~ T_p(\cos\frac{\alpha}{2}) ~=~ T_p(\sqrt{1-\lambda^2}).
$$
It is actually a basic exercise to show that the function $f_p(x)=T_p(\sqrt{1-x^2})-2x$ has exactly one zero in the interval $]0,\sin\frac{\pi}{2p+4}[$.

Now, we observe that
$$
2 ~=~ d(x_1,\ldots,x_n) ~\leq ~ K^*_n{\,}\sum_{i=1}^nd(x_1,\ldots,x_n)_i^z ~=~ n K^*_n {\,}2\lambda,
$$
which shows that $K^*_n\geq 1/(n\lambda)$, where $\lambda$ can be replaced with $\lambda_n$ to express its dependency on $n$. The other three inequalities follow trivially.

Let us now prove the last part of the proposition. It is geometrically clear that the length $(\lfloor n/2\rfloor +1)2\lambda_n$ tends to the length $\pi$ of the upper half circle as $n$ increases to infinity. We then have
$$
\lim_{n\to\infty} \frac{1}{n\lambda_n} ~=~ \lim_{n\to\infty} \frac{2\lfloor n/2\rfloor +2}{n\pi} ~=~ \frac{1}{\pi}
$$
and this completes the proof.
\end{proof}

\begin{table}
\begin{center}
$$
\begin{array}{|c|ccccccc|}
\hline n & 4 & 5 & 6 & 10 & 20 & 50 & 80\\
\hline K^*_n~\geq & 0.559 & 0.447 & 0.455 & 0.391 & 0.352 & 0.331 & 0.326 \\
\hline
\end{array}
$$
\medskip
\caption{Values of $1/(n\lambda_n)$ for various $n$}
\label{tab:fval}
\end{center}
\end{table}

Table~\ref{tab:fval} provides the lower bound $1/(n\lambda_n)$ for some values of $n$. We believe that this lower bound could likely be improved. However, as an intermediate challenge, we conjecture that $\lim_{n\to\infty} K_n^*=1/\pi$.

\section{Length of a minimum spanning tree of the complete Euclidean graph}

Let $q\geq 2$ be an integer, let $x_1,\ldots,x_n\in\R^q$, and let $\mathcal{G}(x_1,\ldots,x_n)$ be the complete Euclidean graph whose vertex set is $V=\{x_1,\ldots,x_n\}$. In this section, we show that the map $d\colon (\R^q)^n\to\R_+$ that carries $(x_1,\ldots,x_n)$ into the total length of a minimal spanning tree of $\mathcal{G}(x_1,\ldots,x_n)$ is an $n$-distance on $\R^q$. We also show that $K^*_2=1$, $K^*_3=1/\sqrt{3}$, and $K^*_4\geq\sqrt{2}/4$. The exact value of $K^*_n$ for any $n\geq 4$ remains unknown.

Let us first consider the special cases when $n=2$ and $n=3$. If $n=2$, then $d$ is the usual Euclidean distance on $\R^q$, and so it is standard (i.e., $K^*_2=1$).

Assume now that $n=3$. Let $x_1,x_2,x_3,z\in\R^q$ such that $|\{x_1,x_2,x_3\}|\geq 2$. Setting $\ell_{i,j}=|x_i-x_j|$ for all $i,j$, we then have
$$
d(x_1,x_2,x_3) ~=~ \ell_{1,2}+\ell_{2,3}+\ell_{3,1}-\max\{\ell_{1,2},\ell_{2,3},\ell_{3,1}\}.
$$
It is then clear that $d(x_i,x_j,z)\geq\ell_{i,j}$ for any $\{i,j\}\subset\{1,2,3\}$. It follows that
$$
\sum_{i=1}^3d(x_1,x_2,x_3)_i^z ~\geq ~ \ell_{1,2}+\ell_{2,3}+\ell_{3,1}
$$
and hence $d$ is a $3$-distance. To see that it is nonstandard, just take $x_1,x_2,x_3$ as the vertices of an equilateral triangle and $x_0$ as the centroid. This example shows that $K^*_3\geq 1/\sqrt{3}\approx 0.577$. We now prove that $K^*_3$ is exactly $1/\sqrt{3}$.

Recall that the \emph{Fermat point} of a triangle $ABC$ is the point $F=F(A,B,C)$ that minimizes the distance $|F-A| + |F-B| + |F-C|$. For background, see, e.g., \cite[Chapter II]{BolMarSol99}. The Fermat point can be easily constructed. If there is an angle greater than $2\pi/3$ at some vertex, then that vertex is the Fermat point. Otherwise, draw equilateral triangles on each of the sides of $ABC$. Connect the far vertex of each equilateral triangle to the opposite vertex of triangle $ABC$. Doing this for each of the three equilateral triangles results in a single common point of intersection for all three lines, which is the Fermat point.

\begin{lemma}\label{lemma:msp35t}
For any $x_0, x_1,x_2,x_3\in\R^q$ such that $|\{x_1,x_2,x_3\}|\geq 2$, we have
\begin{equation}\label{eq:msp35t}
R_d(x_1,x_2,x_3;x_0) ~=~ \max_{z\in\R^2} R_d(x_1,x_2,x_3;z)
\end{equation}
if and only if $x_0=F(x_1,x_2,x_3)$.
\end{lemma}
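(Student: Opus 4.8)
The plan is to turn \eqref{eq:msp35t} into a minimization problem and then read the objective as the length of a small multigraph. Since the numerator $d(x_1,x_2,x_3)$ of $R_d(x_1,x_2,x_3;z)$ does not depend on $z$, identity \eqref{eq:msp35t} is equivalent to the assertion that the function
$$
S(z) ~:=~ \sum_{i=1}^3 d(x_1,x_2,x_3)_i^z ~=~ d(z,x_2,x_3)+d(x_1,z,x_3)+d(x_1,x_2,z)
$$
attains its minimum exactly at $z=F(x_1,x_2,x_3)$. Put $f=\min_w\big(|w-x_1|+|w-x_2|+|w-x_3|\big)$, so that $f=|F-x_1|+|F-x_2|+|F-x_3|$ by definition of the Fermat point; I will use repeatedly that $f\le|x_i-x_j|+|x_i-x_k|$ for each $i$ (take $w=x_i$) and that the minimizer $F$ is unique. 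A first reduction brings us to $q=2$: if $z$ is not in the affine plane spanned by $x_1,x_2,x_3$, replacing $z$ by its orthogonal projection onto that plane strictly decreases each $|z-x_i|$, hence strictly decreases every term $d(x_1,x_2,x_3)_i^z$ (each being the sum of the two smallest of three distances, only one of which avoids $z$), and therefore strictly decreases $S(z)$; so any minimizer lies in that plane. Assume henceforth that $x_1,x_2,x_3$ are pairwise distinct and $z\notin\{x_1,x_2,x_3\}$, the remaining (easy) cases being treated separately.

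The key step is a graph-theoretic reading of $S(z)$. For three points $a,b,c$ the number $d(a,b,c)$ is the total length of a minimum spanning tree of $\{a,b,c\}$, i.e.\ of a two-edge path. Fixing $z$ and choosing, for $k=1,2,3$, such a minimum spanning tree $T_k$ of the triple obtained from $(x_1,x_2,x_3)$ by replacing $x_k$ with $z$, we obtain a multigraph $H_z=T_1\cup T_2\cup T_3$ with six edges on the four vertices $z,x_1,x_2,x_3$, and $S(z)=\ell(H_z)$ (total edge length). Since $z$ lies in all three trees $T_k$ while each $x_i$ lies in exactly two of them, one verifies the Nash--Williams/Tutte cut condition, so $H_z$ decomposes into two edge-disjoint spanning trees $T'$ and $T''$ of $\{z,x_1,x_2,x_3\}$, which together exhaust its six edges. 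Now any tree $T$ on the vertex set $\{z,x_1,x_2,x_3\}$ satisfies $\ell(T)\ge f$: if $z$ is a leaf it is redundant and $T$ restricts to a two-edge path with some middle vertex $x_i$, of length $|x_i-x_j|+|x_i-x_k|\ge f$; if $z$ has degree $3$ then $\ell(T)=|z-x_1|+|z-x_2|+|z-x_3|\ge f$; and if $z$ has degree $2$ then $T$ is a bent path, and replacing the piece through $z$ by a straight edge (triangle inequality) reduces it to the leaf case. Hence $S(z)=\ell(H_z)=\ell(T')+\ell(T'')\ge 2f$.

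It remains to identify the equality. A direct computation gives $S(F)=2f$: if no angle of the triangle reaches $2\pi/3$, then $\sphericalangle x_iFx_j=2\pi/3$ for every pair, so $x_ix_j$ is the longest side of the triangle $\{F,x_i,x_j\}$ and $d(F,x_i,x_j)=|F-x_i|+|F-x_j|$, whence $S(F)=2(|F-x_1|+|F-x_2|+|F-x_3|)=2f$; and if some angle, say at $x_1$, is at least $2\pi/3$, then $F=x_1$, the side $x_2x_3$ is longest, and the three terms sum to $2(|x_1-x_2|+|x_1-x_3|)=2f$. For the converse, suppose $S(z_0)=2f$; then $\ell(T')=\ell(T'')=f$, so $T'$ and $T''$ are connected networks of least possible length spanning $\{x_1,x_2,x_3\}$. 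Since this minimal network is unique — the Fermat star when all angles are below $2\pi/3$, the two short sides at the obtuse vertex otherwise — the extra vertex $z_0$ must be redundant in it, which forces $z_0=F$. The degenerate configurations ($|\{x_1,x_2,x_3\}|=2$, or $z_0$ equal to some $x_i$, or $x_1,x_2,x_3$ collinear) are disposed of by a direct comparison of $S(z_0)$ with $2f$. Combining the two implications yields the lemma.

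The step I expect to be the most delicate is this last equality analysis. In the degenerate cases the minimum-length network on three points, although of well-defined length $f$, need not be unique as a subset of the plane, so the clean ``$z_0$ is redundant hence $z_0=F$'' argument breaks down and one must check by hand that $S(z_0)=2f$ still forces $z_0=F$; this amounts to carrying out the triangle-inequality bookkeeping carefully in each degenerate situation, and making sure the border cases $z_0\in\{x_1,x_2,x_3\}$ and ``exactly $2\pi/3$'' do not produce spurious second maximizers.
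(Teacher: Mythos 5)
Your strategy is genuinely different from the paper's proof, which first projects $x_0$ into $\triangle x_1x_2x_3$ and then splits into cases according to whether the edge avoiding $x_0$ is the longest in each triple, reducing everything to two known minimization facts (the Fermat point minimizes $\sum_i|w-x_i|$, and the Steiner point of three terminals is the Fermat point). Your global lower bound $S(z)\ge 2f$, obtained by decomposing the six-edge multigraph $H_z=T_1\cup T_2\cup T_3$ into two edge-disjoint spanning trees each of length at least the Steiner length $f$ of $\{x_1,x_2,x_3\}$, is correct and quite elegant, as is the verification that $S(F)=2f$. The problem lies in the converse direction, and not only in the degenerate cases you flag.

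Concretely: when some angle of $\triangle x_1x_2x_3$ is at least $2\pi/3$, say at $x_1$, the unique minimal network is $[x_1,x_2]\cup[x_1,x_3]$ and $F=x_1$; but then for \emph{any} $z_0$ in the relative interior of $[x_1,x_2]$ the spanning tree with edges $\{x_3,x_1\},\{x_1,z_0\},\{z_0,x_2\}$ already has length exactly $f$. So the chain ``$\ell(T')=f$, the minimal network is unique, hence $z_0$ is redundant, hence $z_0=F$'' breaks: redundancy of $z_0$ in the unique minimal network does not pin $z_0$ to $F$ in the obtuse case, and this case is not among the degeneracies you set aside. To close the argument you must show that the \emph{specific} multigraph $H_{z_0}$ cannot split into \emph{two} length-$f$ trees unless $z_0=F$; for instance, for $z_0$ at distance $s>0$ from $x_1$ on $[x_1,x_2]$ one computes $S(z_0)=2|x_1-x_2|+|x_1-x_3|+|z_0-x_3|>2f$, because the obtuse angle at $x_1$ forces $|z_0-x_3|>|x_1-x_3|$ --- but this is precisely the case-by-case bookkeeping your write-up defers. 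The same remark applies to the degenerate configurations you list (collinear points, $z_0\in\{x_1,x_2,x_3\}$, $|\{x_1,x_2,x_3\}|=2$): they are asserted, not proved. In the strictly acute regime your uniqueness argument does work, though even there the passage from $\ell(T')=f$ to ``the point set $\bigcup T'$ is the Fermat star, whose degree-three branch point must be a vertex of $T'$'' deserves a sentence (the edges of $T'$ must be pairwise non-overlapping, each contained in a single leg, and a leg not containing $z_0$ cannot be covered near the branch point unless $F$ is itself a vertex).
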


\begin{proof}
(Necessity) Let us first show that $x_0$ must be inside or on the edge of $\triangle x_1x_2x_3$. Suppose that the point $x_0$ that satisfies \eqref{eq:msp35t} is outside $\triangle x_1x_2x_3$ and let $\tilde{x}_0$ be the orthogonal projection of $x_0$ onto $\triangle x_1x_2x_3$. Then we have $|x_i-x_0|>|x_i-\tilde{x}_0|$ for $i=1,2,3$ and hence $d(x_0,x_i,x_j)>d(\tilde{x}_0,x_i,x_j)$ for $\{i,j\}\subset\{1,2,3\}$. If follows that $R_d(x_1,x_2,x_3;x_0)<R_d(x_1,x_2,x_3;\tilde{x}_0)$, a contradiction.

Let us now prove that $x_0=F(x_1,x_2,x_3)$. There are two exclusive cases to consider.
\begin{enumerate}
\item[(a)] Suppose that $\max\{\ell_{i,j},\ell_{0,i},\ell_{0,j}\}=\ell_{i,j}$ for $\{i,j\}\subset\{1,2,3\}$. Then
    $$
    \sum_{i=1}^3d(x_1,x_2,x_3)_i^{x_0}=2(\ell_{0,1}+\ell_{0,2}+\ell_{0,3}),
    $$
    which is minimized only for $x_0=F(x_1,x_2,x_3)$.
\item[(b)] Suppose that $\max\{\ell_{1,2},\ell_{0,1},\ell_{0,2}\}>\ell_{1,2}$, which implies that $\sphericalangle x_1x_0x_2 <\pi/2$. We then have $\sphericalangle x_2x_0x_3\geq\pi/2$ and $\sphericalangle x_3x_0x_1\geq\pi/2$ and hence
    $$
    \max\{\ell_{i,j},\ell_{0,i},\ell_{0,j}\} ~=~\ell_{i,j}{\,},\quad\text{for $\{i,j\}\in\{\{2,3\},\{3,1\}\}$}.
    $$
    We then have
    \begin{eqnarray*}
    \sum_{i=1}^3d(x_1,x_2,x_3)_i^{x_0} &=& (\ell_{0,1}+\ell_{0,2}+\ell_{0,3})+(\ell_{1,2}+\min\{\ell_{0,1},\ell_{0,2}\}+\ell_{0,3})\\
    &=& (\ell_{0,1}+\ell_{0,2}+\ell_{0,3})+w(x_0,x_1,x_2,x_3),
    \end{eqnarray*}
    where $w(x_0,x_1,x_2,x_3)$ is the total length of a minimum spanning tree of the graph $\mathcal{G}(x_0,x_1,x_2,x_3)$. We know that the sum $\ell_{0,1}+\ell_{0,2}+\ell_{0,3}$ is minimized only for $x_0=F(x_1,x_2,x_3)$. Now, considering the classical Euclidean Steiner tree for $x_1,x_2,x_3$, it is known (see, e.g., \cite[p.~4]{Cie98}) that the corresponding Steiner point is $F(x_1,x_2,x_3)$, which means that $w(x_0,x_1,x_2,x_3)$ is minimized only for $x_0=F(x_1,x_2,x_3)$.
\end{enumerate}

(Sufficiency) This results from existence and uniqueness of the Fermat point.
\end{proof}

\begin{proposition}\label{prop:MST333tz}
The map $d\colon(\R^q)^3\to\R$ that carries $(x_1,x_2,x_3)$ into the total length of a minimal spanning tree of $\mathcal{G}(x_1,x_2,x_3)$ is a $3$-distance on $\R^q$. Its best constant is $K_3^*=1/\sqrt{3}$ and is attained at any $(x_1,x_2,x_3;z)$ such that $\triangle x_1x_2x_3$ is an equilateral triangle with centroid $z$.
\end{proposition}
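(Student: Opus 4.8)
The plan is to reduce everything to the three‑point Steiner ratio inequality. The discussion preceding the statement already establishes that $d$ is a $3$-distance and that $K_3^*\ge 1/\sqrt3$, so only the upper bound $K_3^*\le 1/\sqrt3$ and the description of the extremal configurations remain. By Lemma~\ref{lemma:msp35t}, for any fixed $x_1,x_2,x_3$ the quantity $R_d(x_1,x_2,x_3;z)$ attains its maximum over $z\in\R^2$ exactly at the Fermat point $x_0=F(x_1,x_2,x_3)$; hence it suffices to bound $R_d(x_1,x_2,x_3;F)$ for every triangle and to determine when the value $1/\sqrt3$ is reached.

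First I would split according to the two cases already appearing in the proof of Lemma~\ref{lemma:msp35t}. If the largest interior angle of $\triangle x_1x_2x_3$ is at least $2\pi/3$, say at $x_1$, then $F=x_1$; since the opposite side $\ell_{2,3}$ is then the longest, $d(x_1,x_2,x_3)=\ell_{1,2}+\ell_{1,3}$, and a direct evaluation gives $\sum_{i=1}^3 d(x_1,x_2,x_3)_i^{F}=(\ell_{1,2}+\ell_{1,3})+\ell_{1,3}+\ell_{1,2}=2(\ell_{1,2}+\ell_{1,3})$, so $R_d=1/2<1/\sqrt3$. In the complementary case $F$ lies in the interior and $\sphericalangle x_iFx_j=2\pi/3$ for every pair; the law of cosines then yields $|x_i-x_j|^2=|F-x_i|^2+|F-x_j|^2+|F-x_i|\,|F-x_j|$, so $|x_i-x_j|$ is the longest edge of the triangle $\{F,x_i,x_j\}$, whence $d(x_1,x_2,x_3)_k^F=|F-x_i|+|F-x_j|$ whenever $\{i,j,k\}=\{1,2,3\}$, and therefore $\sum_{i=1}^3 d(x_1,x_2,x_3)_i^F=2L_F$ with $L_F=|F-x_1|+|F-x_2|+|F-x_3|$.

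It then remains only to treat the interior case, where $R_d(x_1,x_2,x_3;F)=d(x_1,x_2,x_3)/(2L_F)$. Since $d(x_1,x_2,x_3)$ is the length of a minimum spanning tree on $\{x_1,x_2,x_3\}$ while $L_F$ is the length of the Euclidean Steiner minimal tree on the same three points, the inequality $R_d\le 1/\sqrt3$ is precisely the three‑point case of the Steiner ratio bound $\mathrm{SMT}\ge(\sqrt3/2)\,\mathrm{MST}$. This is elementary: using either the rotation argument that expresses $L_F$ as the distance from a vertex to the apex of the equilateral triangle erected externally on the opposite side, or the identity $L_F^2=\tfrac12(a^2+b^2+c^2)+2\sqrt3\,S$ (with $a,b,c$ the side lengths and $S$ the area), one fixes the two shorter sides and is left with a one‑variable estimate forcing $R_d\le 1/\sqrt3$, with equality only when $a=b=c$. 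Combining the two cases yields $K_3^*\le 1/\sqrt3$; moreover equality can occur only in the interior case with $\triangle x_1x_2x_3$ equilateral, in which case $F$ is its centroid, so the supremum is genuinely attained there, in agreement with the lower‑bound example. The only step that is not pure bookkeeping is the three‑point Steiner ratio estimate together with the analysis of its equality case; everything else follows from Lemma~\ref{lemma:msp35t} and the explicit form of $d$ on three points.
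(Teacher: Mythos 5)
Your proposal is correct, and after the common first step (using Lemma~\ref{lemma:msp35t} to restrict to $z=F(x_1,x_2,x_3)$ and disposing of the case where $F$ is a vertex, where the ratio is $\tfrac12$) it takes a genuinely different route through the decisive optimization. You exploit the fact that in the interior case all three angles at $F$ equal $2\pi/3$, so $|x_i-x_j|$ is the longest side of $\triangle x_iFx_j$ for each pair, giving $\sum_{i=1}^3 d(x_1,x_2,x_3)_i^F=2L_F$ with $L_F$ equal to the length of the Euclidean Steiner minimal tree on the three points; the bound $R_d\le 1/\sqrt3$ then becomes exactly the classical three-point Steiner ratio inequality $L_F\ge(\sqrt3/2)\,d(x_1,x_2,x_3)$, whose equality case (the equilateral triangle, with $F$ the centroid) yields the attainment statement. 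The paper instead keeps the ratio in the explicit form $(\ell_{1,3}+\ell_{2,3})/\bigl(2(|x_1-F|+|x_2-F|+|x_3-F|)\bigr)$ and maximizes it by a two-stage geometric reduction: first showing that symmetrizing the two shorter sides about $x_3$ (keeping $F$ inside the new triangle) does not decrease the ratio, which reduces the problem to isosceles triangles, and then a one-variable calculus maximization of $f(t)$ with $t=|x_3-F|$. Your route is more conceptual and shorter, at the price of importing (or re-proving) the three-point Steiner ratio bound together with its equality analysis; the paper's route is self-contained but needs the somewhat delicate verification that $|x_i-F|\ge|x'_i-F|$ under the symmetrization. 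The one step you leave as a sketch is that Steiner ratio estimate, but either of the two standard arguments you name closes it: for instance, from $L_F^2=\tfrac12(a^2+b^2+c^2)+2\sqrt3\,S$ one gets $L_F^2=a^2+b^2+2ab\sin\bigl(C-\tfrac{\pi}{6}\bigr)\ge\tfrac34(a+b)^2$ using $a^2+b^2\ge 2ab$ and $\sin\bigl(C-\tfrac{\pi}{6}\bigr)\ge\tfrac12$ for the largest angle $C\in\left[\pi/3,2\pi/3\right[$, with equality only when $a=b$ and $C=\pi/3$. So there is no gap, only a classical elementary fact cited rather than rederived.
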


\begin{proof}
Let $x_1,x_2,x_3,z\in\R^2$ be such that $|\{x_1,x_2,x_3\}|\geq 2$. By Lemma~\ref{lemma:msp35t}, the unique optimal choice for $z$ is $F=F(x_1,x_2,x_3)$ and hence we can assume that $q=2$. We can also assume that $F\notin\{x_1,x_2,x_3\}$, for otherwise the simplex ratio $R_d(x_1,x_2,x_3;F)$ would be $\frac{1}{2}$. We now prove that $\triangle ABC$ can always be transformed into an isosceles triangle whose simplex ratio is higher than $R_d(x_1,x_2,x_3;F)$. We can assume without loss of generality that $\ell_{1,2}=\max\{\ell_{1,2},\ell_{2,3},\ell_{3,1}\}$. Let $x'_1,x'_2\in\R^2$ be such that $|x_1-x_3|=|x'_1-x_3|$, $|x_2-x_3|=|x'_2-x_3|$, and $\max\{|x'_1-x_3|,|x'_2-x_3|\}=|x'_1-x'_2|$. Thus, $\triangle x'_1x'_2x_3$ is isosceles. If we assume further that $F\in\triangle x'_1x'_2x_3$, which is always possible, then it is not difficult to see that
$$
|x_i-F| ~\geq ~ |x'_i-F|\quad\text{for $i=1,2,$}
$$
and hence that
\begin{eqnarray*}
|x_1-F|+|x_2-F|+|x_3-F| & \geq & |x'_1-F|+|x'_2-F|+|x_3-F|\\
&\geq & |x'_1-F'|+|x'_2-F'|+|x_3-F'|,
\end{eqnarray*}
where $F'=F(x'_1,x'_2,x_3)$. It follows that
\begin{eqnarray*}
R_d(x_1,x_2,x_3;F) &=& \frac{|x_1-x_3|+|x_2-x_3|}{2(|x_1-F|+|x_2-F|+|x_3-F|)}\\
&\leq & \frac{|x'_1-x_3|+|x'_2-x_3|}{2(|x'_1-F'|+|x'_2-F'|+|x_3-F'|)} ~=~ R_d(x'_1,x'_2,x_3;F').
\end{eqnarray*}
To conclude the proof, we now show that from among all isosceles triangles, only the equilateral one reaches the highest simplex ratio. Fix two distinct points $x_1$ and $x_2$ and let the point $x_3$ vary so that $\triangle x_1x_2x_3$ is always an isosceles triangle with $|x_1-x_3|=|x_2-x_3|$. Again, we can assume that $F\notin\{x_1,x_2,x_3\}$. We can also assume that $|x_1-F|=|x_2-F|=1$. We then have $|x_1-x_2|=\sqrt{3}$. We now set $t=|x_3-F|$ and determine the value of $t$ that maximizes the simplex ratio $f(t)=R_d(x_1,x_2,x_3;F)$. Setting $a(t)=|x_1-x_3|=|x_2-x_3|$, the law of cosines provides the identity $a(t)^2=t^2+t+1$. We then have
$$
f(t) ~=~
\begin{cases}
\frac{\sqrt{t^2+t+1}+\sqrt{3}}{2(t+2)_{\mathstrut}}{\,}, & \text{if $t\geq 1$},\\
\frac{\sqrt{t^2+t+1}^{\mathstrut}}{t+2}{\,}, & \text{if $t\leq 1$}.
\end{cases}
$$
We immediately see that this function has a unique global maximum at $t=1$, which corresponds to an equilateral triangle. Thus, $K^*_3=f(1)=1/\sqrt{3}$.
\end{proof}

We now consider the general case for any $n\geq 2$ and show that the map $d$ is an $n$-distance. The proof uses a similar argument as in the proof of Proposition~\ref{prop:LIB-nD3}.

\begin{proposition}\label{prop:sda7sds33}
The map $d\colon (\R^q)^n\to\R_+$ that carries $(x_1,\ldots,x_n)$ into the total length of a minimal spanning tree of $\mathcal{G}(x_1,\ldots,x_n)$ is an $n$-distance on $\R^q$.
\end{proposition}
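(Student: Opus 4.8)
The plan is to argue in the same spirit as the proof of Proposition~\ref{prop:LIB-nD3}, passing to the complete Euclidean graph on the augmented point set $\{x_1,\ldots,x_n,z\}$ and extracting the simplex inequality from a few elementary estimates on minimum spanning tree length. Conditions (i) and (ii) are clear, so only the simplex inequality needs checking; since the cases $n=2$ and $n=3$ are already settled (see Proposition~\ref{prop:MST333tz}), I may assume $n\ge 4$. Fix $x_1,\ldots,x_n,z\in\R^q$ with $|\{x_1,\ldots,x_n\}|\ge 2$. If $z=x_j$ for some $j$, then $d(x_1,\ldots,x_n)_j^z=d(x_1,\ldots,x_n)$ and the inequality is immediate, so I may also assume $z\notin\{x_1,\ldots,x_n\}$. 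Writing $w(A)$ for the total length of a minimum spanning tree of the complete Euclidean graph on a finite set $A\subset\R^q$, let $\{y_1,\ldots,y_m\}$ be the set of distinct points among $x_1,\ldots,x_n$ (necessarily $m\ge 2$) and put $Y=\{z,y_1,\ldots,y_m\}$. Then $d(x_1,\ldots,x_n)=w(Y\setminus\{z\})$, while each of the $n$ numbers $d(x_1,\ldots,x_n)_i^z$ equals $w(Y)$ (when $x_i$ is a repeated point) or $w(Y\setminus\{y_j\})$ with $x_i=y_j$ (when $x_i$ occurs once), the latter case using each index $j$ at most once.

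The argument will rest on three facts about $w$, each valid for every finite $A\subset\R^q$: \textbf{(a)} for $a\in A$, $w(A\setminus\{a\})\ge w(A)-\operatorname{dist}(a,A\setminus\{a\})$, since attaching $a$ to a minimum spanning tree of $A\setminus\{a\}$ through its nearest neighbour produces a spanning tree of $A$; \textbf{(b)} $\sum_{a\in A}\operatorname{dist}(a,A\setminus\{a\})\le 2\,w(A)$, because $\operatorname{dist}(a,A\setminus\{a\})$ does not exceed the length of a shortest edge incident to $a$ in a fixed minimum spanning tree $T$ of $A$, and each edge of $T$ plays this role for at most its two endpoints; \textbf{(c)} if $|A|\ge 3$ then $w(A\setminus\{a\})\le 2\,w(A)$, because deleting from a minimum spanning tree $T$ of $A$ a vertex $a$ of degree $k$ with neighbours $u_1,\ldots,u_k$ and reconnecting the $k$ resulting components by the path $u_1u_2\cdots u_k$ costs, by the triangle inequality, at most $2\sum_l|u_l-a|$ in new edge length, whence $w(A\setminus\{a\})\le w(T)+\sum_l|u_l-a|\le 2\,w(T)=2\,w(A)$.

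Granting (a)--(c), the simplex inequality follows by counting the $n$ summands. Applying (a) to the summands of the form $w(Y\setminus\{y_j\})$ and keeping the others, I obtain
$$
\sum_{i=1}^n d(x_1,\ldots,x_n)_i^z ~\ge~ n\,w(Y)-\sum_{j}\operatorname{dist}(y_j,Y\setminus\{y_j\}) ~\ge~ n\,w(Y)-2\,w(Y) ~=~ (n-2)\,w(Y),
$$
where the inner sum runs over the (distinct) indices $j$ occurring above, and the second inequality holds because that inner sum is dominated by $\sum_{a\in Y}\operatorname{dist}(a,Y\setminus\{a\})\le 2\,w(Y)$ by (b). Finally, since $n\ge 4$ and $|Y|=m+1\ge 3$, fact (c) with $A=Y$ and $a=z$ gives $(n-2)\,w(Y)\ge 2\,w(Y)\ge w(Y\setminus\{z\})=d(x_1,\ldots,x_n)$, which is the simplex inequality.

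The crux — and the reason a somewhat crude argument cannot easily be avoided — is that $w$ is \emph{not} monotone under adjoining points: attaching the centre (equivalently, the Fermat point) of an equilateral triangle to its three vertices strictly decreases the minimum spanning tree length. Hence one cannot compare $d(x_1,\ldots,x_n)=w(Y\setminus\{z\})$ with $w(Y)$ directly; the best one can say is that removing a point at most doubles $w$ (fact (c)), and this factor of $2$ is absorbed precisely because there are $n\ge 4$ summands — the failure of this count for $n=3$ being exactly why that case needed the separate Fermat-point analysis of Proposition~\ref{prop:MST333tz}. The remaining minor point, the handling of coinciding $x_i$, is taken care of by the reduction to the point set $Y$ made at the outset.
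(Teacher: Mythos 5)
Your proof is correct, but it follows a genuinely different route from the paper's. The paper anchors its argument on the original point set: it fixes a minimum spanning tree $T$ of $\mathcal{G}(x_1,\ldots,x_n)$, lets $\ell=\ell_{1,2}$ be the length of its longest edge, observes that $d(x_1,\ldots,x_n)_i^z\geq\ell$ for $i\geq 3$ (the modified vertex set still contains $x_1$ and $x_2$, so any spanning tree of it contains a path between them of length at least $|x_1-x_2|$) and that $d(x_1,\ldots,x_n)_1^z+d(x_1,\ldots,x_n)_2^z\geq\ell$ (the union of the two corresponding trees contains an $x_1$--$z$--$x_2$ path), and concludes from $(n-1)\ell\geq\sum_{e\in E}\ell_e$. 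You instead work on the augmented set $Y=\{z\}\cup\{x_1,\ldots,x_n\}$ and control everything through your vertex-deletion estimates (a)--(c), which are standard and correctly justified, ending with $\sum_{i}d(x_1,\ldots,x_n)_i^z\geq(n-2)\,w(Y)\geq 2\,w(Y)\geq w(Y\setminus\{z\})$; your reduction to the distinct-point set $Y$ also handles coinciding arguments cleanly, and your remark on the failure of monotonicity of $w$ under adjoining a point (the Fermat-point phenomenon) correctly pinpoints why one cannot compare $w(Y)$ and $w(Y\setminus\{z\})$ directly. What each approach buys: yours is more modular, with reusable lemmas and the intermediate bound $\sum_i d(x_1,\ldots,x_n)_i^z\geq(n-2)\,w(\{x_1,\ldots,x_n,z\})$ as a byproduct, but the factor $n-2\geq 2$ is exactly what forces the restriction to $n\geq 4$; the paper's argument is shorter, gives the sharper intermediate bound $(n-1)\ell$ in terms of the longest MST edge, and would in fact cover $n=2,3$ uniformly. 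Since both you and the paper defer $n\leq 3$ to Proposition~\ref{prop:MST333tz}, this difference is harmless, and your proof is complete as written.
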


\begin{proof}
We have seen that the result holds for $n=2$ and $n=3$. We can therefore assume that $n\geq 4$. Let $x_1,\ldots,x_n,z\in\R^q$ satisfying $|\{x_1,\ldots,x_n\}|\geq 2$. Let also $T=(V,E)$ be a minimal spanning tree of $\mathcal{G}(x_1,\ldots,x_n)$.

Let $\ell$ be the length of the longest edge in $T$ and suppose without loss of generality that $\ell=\ell_{1,2}$. We then have $d(x_1,\ldots,x_n)_i^z\geq \ell$ for $i=3,\ldots,n$. Now, let $T_1$ and $T_2$ be minimal spanning trees of $\mathcal{G}(z,x_2,x_3,\ldots,x_n)$ and $\mathcal{G}(x_1,z,x_3,\ldots,x_n)$, respectively. The union of these trees contains a path from $x_1$ to $x_2$ through $z$. By definition of $\ell$, the length of this path is at least $\ell$. It follows that
$$
d(x_1,\ldots,x_n)_1^z+d(x_1,\ldots,x_n)_2^z ~\geq ~ \ell.
$$
In total, we obtain
$$
\sum_{i=1}^n d(x_1,\ldots,x_n)_i^z ~\geq ~ \ell +(n-2)\ell ~=~ (n-1)\ell ~\geq ~ \sum_{e\in E}\ell_e ~\geq ~ d(x_1,\ldots,x_n),
$$
where $\ell_e$ denotes the length of $e$.
\end{proof}

Finding the exact value of $K^*_n$ for $n\geq 4$ remains an interesting open question. Proposition~\ref{prop:MST333tz} suggests that $K^*_n$ could be attained by considering a regular $n$-gon with vertices $x_1,\ldots,x_n$ and centroid $z$. For $n=4$, this provides the inequality $K^*_4\geq\sqrt{2}/4$. However, for $n\geq 5$, the corresponding simplex ratio is lower than $(n-1)^{-1}$ and hence not useful. Nevertheless, we conjecture that $d$ is nonstandard if and only if $n\geq 3$.

We end this section with the following proposition, which provides both an alternative proof of Proposition~\ref{prop:sda7sds33} and an upper bound for the best constant $K^*_n$.

\begin{proposition}\label{prop:sda7sds332}
The map $d\colon (\R^q)^n\to\R_+$ that carries $(x_1,\ldots,x_n)$ into the total length of a minimal spanning tree of $\mathcal{G}(x_1,\ldots,x_n)$ is an $n$-distance on $\R^q$. Moreover, we have $K^*_n\leq\frac{2}{n}$ and this inequality is strict whenever $n\geq 3$.
\end{proposition}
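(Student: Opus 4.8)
The plan is to prove the sharp statement $K^*_n\le 2/n$ directly; since $2/n\le 1$ for $n\ge 2$, this in particular gives the simplex inequality and hence re-proves that $d$ is an $n$-distance. Fix $x_1,\dots,x_n,z\in\R^q$ with $|\{x_1,\dots,x_n\}|\ge 2$; writing $w(\,\cdot\,)$ for total length and $T$ for a minimal spanning tree of $\mathcal{G}(x_1,\dots,x_n)$, we must show $\frac n2\,w(T)\le\sum_{i=1}^n d(x_1,\dots,x_n)_i^z$. It is cleanest to symmetrize the statement: set $x_{n+1}=z$ and, for $j=1,\dots,n+1$, let $L^{(j)}$ be the total length of a minimal spanning tree of $\mathcal{G}$ on the $n$ points $\{x_1,\dots,x_{n+1}\}\setminus\{x_j\}$. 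Then $d(x_1,\dots,x_n)=L^{(n+1)}$ and the sum above equals $\sum_{i\ne n+1}L^{(i)}$, so the inequality to prove is $\frac n2 L^{(n+1)}\le\sum_{i\ne n+1}L^{(i)}$, which by symmetry follows once we show $\frac n2 L^{(j)}\le\sum_{i\ne j}L^{(i)}$ for every $j$; that is, no single leave-one-out spanning tree length among $n+1$ points exceeds $2/n$ times the sum of the other $n$.

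The argument should rest on two ingredients. The first is the path argument already used for Proposition~\ref{prop:sda7sds33}: if $\ell$ is the longest edge of $T$, say $\ell=\ell_{1,2}$, then for $i\ge 3$ the points $x_1,x_2$ still lie in $\{x_1,\dots,x_n,z\}\setminus\{x_i\}$ and must be joined by a path in its minimal spanning tree, so $d(x_1,\dots,x_n)_i^z\ge\ell$, while $d(x_1,\dots,x_n)_1^z+d(x_1,\dots,x_n)_2^z\ge|x_1-z|+|z-x_2|\ge\ell$ by the triangle inequality, giving $\sum_i d(x_1,\dots,x_n)_i^z\ge(n-1)\ell$. The second ingredient is the Kruskal identity $w(S)=\int_0^{\infty}\bigl(c_S(t)-1\bigr)\,dt$ for the minimal spanning tree length of a finite set $S$, where $c_S(t)$ is the number of connected components of the graph on $S$ whose edges are the pairs at distance $\le t$. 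The geometric point is that, by the triangle inequality, all points of $V:=\{x_1,\dots,x_n\}$ lying within distance $t$ of $z$ belong to a single component of the threshold-$2t$ graph on $V$; feeding this into a careful count of how deleting $x_i$ and inserting $z$ changes the threshold-$t$ graph, one obtains a pointwise-in-$t$ lower bound for $\sum_i\bigl(c_{V_i}(t)-1\bigr)$ in terms of $c_V(2t)$, where $V_i=(V\setminus x_i)\cup\{z\}$, and, after integration, $d(x_1,\dots,x_n)_i^z\ge\tfrac12\bigl(w(T)-\ell\bigr)$ for each $i$, hence $\sum_i d(x_1,\dots,x_n)_i^z\ge\tfrac n2\bigl(w(T)-\ell\bigr)$: the threshold estimate accounts for everything except the single longest edge.

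The main obstacle is to combine these two estimates so that together they yield exactly $\frac n2\,w(T)$ and not merely the maximum of a partial bound with a slack term (that maximum is in general strictly smaller than $\frac n2\,w(T)$): the threshold estimate pays for $w(T)-\ell$ but loses a factor coming from the rescaling $t\mapsto 2t$, while the path estimate sees only the longest edge. Since the two estimates concern disjoint ranges of the threshold parameter, the resolution I would pursue is to run the component count finely enough that, for each $i$, the contribution of small thresholds recovers $\ell$ (respectively $|x_1-z|+|z-x_2|$ for $i\in\{1,2\}$) while the contribution of large thresholds simultaneously recovers $\tfrac12\bigl(w(T)-\ell\bigr)$, so that the sum is at least $\tfrac n2\ell+\tfrac n2\bigl(w(T)-\ell\bigr)=\tfrac n2 w(T)$; making this bookkeeping watertight is the crux. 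Finally, for the strict inequality when $n\ge 3$: the simplex ratio is continuous on $\{|\{x_1,\dots,x_n\}|\ge 2\}$, and by the translation and scale invariance of $d$ together with the fact that the ratio tends to $0$ as $|z|\to\infty$, its supremum is attained; one then checks that equality $K^*_n=2/n$ would force a configuration of $n$ points pairwise at a common distance $D$ together with a further point equidistant from all of them at distance exactly $D/2$, which is impossible for $n\ge 3$ — consistently with this, the regular simplex with its centroid already realizes the ratio $\sqrt{2/(n(n-1))}<2/n$.
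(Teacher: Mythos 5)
Your proposal does not contain a complete proof of the key bound $K^*_n\le 2/n$: the step you yourself identify as ``the crux'' --- combining the path estimate $\sum_i d(x_1,\ldots,x_n)_i^z\ge(n-1)\ell$ with the threshold estimate $\sum_i d(x_1,\ldots,x_n)_i^z\ge\tfrac n2(w(T)-\ell)$ into the single bound $\tfrac n2 w(T)$ --- is never carried out, and it is not clear it can be along the lines you sketch. The two estimates draw on the same resource (the edges of $T$, equivalently the same range of the threshold parameter), so they cannot simply be added, and their maximum is in general strictly less than $\tfrac n2 w(T)$. Moreover the intermediate claim $d(x_1,\ldots,x_n)_i^z\ge\tfrac12\bigl(w(T)-\ell\bigr)$ is only asserted via an unspecified ``careful count'' of components in the threshold graphs; as written this is a conjecture, not a proof. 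The strictness claim for $n\ge 3$ is likewise left unsubstantiated: you assert that attainment of $2/n$ would force a configuration of $n$ mutually equidistant points with a common ``midpoint,'' but no argument is given, and the attainment of the supremum itself requires ruling out degenerating configurations rather than just invoking scale invariance.

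The paper's proof is far simpler and avoids all of this. Let $T_i$ denote a minimal spanning tree of $\mathcal{G}(x_1,\ldots,x_n)_i^z$ (the configuration with $x_i$ replaced by $z$). For each cyclic pair $(T_i,T_{i+1})$, the union $T_i\cup T_{i+1}$ is a connected graph whose vertex set contains all of $x_1,\ldots,x_n$ (the point $x_i$ missing from $T_i$ is present in $T_{i+1}$ and vice versa, and the two trees share the common vertex $z$ and the remaining $x_j$'s), so its total length is at least $d(x_1,\ldots,x_n)$. Summing these $n$ inequalities, each $T_i$ is counted twice, which gives $2\sum_{i=1}^n d(x_1,\ldots,x_n)_i^z\ge n\, d(x_1,\ldots,x_n)$ directly, with no need for the longest-edge bookkeeping or the component-counting integral. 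I would encourage you to look for this kind of pairing/double-counting argument before developing the heavier machinery; your symmetrized reformulation in terms of the leave-one-out lengths $L^{(j)}$ was actually a good starting point for discovering it.
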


\begin{proof}
We can assume that $n\geq 3$. Let $x_1,\ldots,x_n,z\in\R^q$ satisfying $|\{x_1,\ldots,x_n\}|\geq 2$. Let $T=(V,E)$ be a minimal spanning tree of $\mathcal{G}(x_1,\ldots,x_n)$. Let also $T_1$ and $T_2$ be minimal spanning trees of $\mathcal{G}(z,x_2,x_3,\ldots,x_n)$ and $\mathcal{G}(x_1,z,x_3,\ldots,x_n)$, respectively. Clearly, $T_1\cup T_2$ is a connected graph with $x_1,\ldots,x_n,z$ as vertices. By definition of $T$, the sum of the lengths of $T_1$ and $T_2$ is always greater than or equal to the length of $T$. That is,
$$
d(x_1,x_2,\ldots,x_n) ~\leq ~ d(z,x_2,\ldots,x_n)+d(x_1,z,\ldots,x_n).
$$
This immediately shows that $d$ is an $n$-distance.

Now, proceeding similarly for the pairs $(T_2,T_3),(T_3,T_4),\ldots,(T_n,T_1)$, and then adding the resulting inequalities, we finally obtain
$$
n{\,}d(x_1,\ldots,x_n) ~\leq ~ 2\,\sum_{i=1}^nd(x_1,\ldots,x_n)_i^z{\,},
$$
from which we immediately derive $K^*_n\leq\frac{2}{n}${\,}. It is then easy to see that this inequality is strict whenever $n\geq 3$.
\end{proof}

\section{Total length of the Euclidean Steiner tree}

The Euclidean Steiner tree problem is a variant of the Euclidean minimum spanning tree problem that can be described as follows (see, e.g., \cite{BraGraThoZach14,Cie98}).

Given $n$ points $x_1,\ldots,x_n$ in the plane $\R^2$, the problem consists in finding the shortest network (in the Euclidean sense) connecting the points, allowing the addition of auxiliary points (called Steiner points) to the set with the purpose of minimizing the total length.

A very simple argument can be used to establish that the length of this shortest network is an $n$-distance.

\begin{proposition}\label{prop:dsa85sfdsd}
The map $d\colon(\R^2)^n\to\R_+$ that carries $(x_1,\ldots,x_n)$ into the length of the Euclidean Steiner tree constructed on the points $x_1,\ldots,x_n$ is an $n$-distance on $\R^2$.
\end{proposition}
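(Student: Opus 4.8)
The plan is to verify the three defining conditions of an $n$-distance. Conditions (i) and (ii) are immediate, and the simplex inequality (iii) reduces to the same two-tree gluing trick already exploited in Section~4.

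First I would dispose of conditions (i) and (ii). Symmetry is clear, since $d(x_1,\ldots,x_n)$ depends only on the \emph{set} $\{x_1,\ldots,x_n\}$ of terminal points. For condition (i): if $x_1=\cdots=x_n$ there is nothing to connect, so the Steiner tree has length $0$; conversely, as soon as two of the points are distinct, every connected network passing through all of them has positive length, whence $d(x_1,\ldots,x_n)>0$.

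For the simplex inequality, fix $x_1,\ldots,x_n,z\in\R^2$; if $x_1=\cdots=x_n$ the inequality is trivial, so assume otherwise. For $i=1,2$, let $S_i$ be a Euclidean Steiner tree for the $n$-tuple obtained from $(x_1,\ldots,x_n)$ by replacing its $i$th entry with $z$, so that $\mathrm{length}(S_i)=d(x_1,\ldots,x_n)_i^z$. Both $S_1$ and $S_2$ contain the point $z$, so $S_1\cup S_2$ is a connected network in the plane passing through all of $x_1,\ldots,x_n$. Since $d(x_1,\ldots,x_n)$ is, by definition, the minimum total length among all connected planar networks joining $x_1,\ldots,x_n$ (Steiner points allowed), this competitor gives
$$
d(x_1,\ldots,x_n) ~\le~ \mathrm{length}(S_1\cup S_2) ~\le~ \mathrm{length}(S_1)+\mathrm{length}(S_2) ~=~ d(x_1,\ldots,x_n)_1^z+d(x_1,\ldots,x_n)_2^z ,
$$
and adjoining the remaining nonnegative terms $d(x_1,\ldots,x_n)_i^z$ for $i=3,\ldots,n$ yields (iii).

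I do not expect any real obstacle here: the argument is precisely the two-tree union used in the proof of Proposition~\ref{prop:sda7sds332}. The only point deserving a word of care is the exact meaning of the \emph{length of the Euclidean Steiner tree} — namely the minimum (attained, by the classical existence theorem for Steiner minimal trees on finite planar point sets; the infimum formulation would already suffice for the inequality) of the total edge length over connected networks in $\R^2$ joining the prescribed points — which is exactly what makes $S_1\cup S_2$ an admissible competitor and hence closes the argument.
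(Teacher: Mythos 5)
Your argument is correct and rests on the same idea as the paper's proof: exhibit a union of perturbed Steiner trees, each containing $z$, as a connected competitor network spanning $x_1,\ldots,x_n$, so that minimality of the Steiner length yields the simplex inequality. The only (harmless, in fact slightly sharper) difference is that you glue just the two trees $S_1\cup S_2$ and pad with the remaining nonnegative terms --- the refinement the paper defers to Proposition~\ref{prop:sda7sds332443} --- whereas the paper's own proof of this proposition takes the union of all $n$ trees $S_i$.
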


\begin{proof}
Clearly, it is enough to establish the simplex inequality. Let $S$ denote the Steiner tree constructed on $x_1,\ldots,x_n$. For any $z\in\R^2$ and any $i\in\{1,\ldots,n\}$, let $S_i$ denote the Steiner tree constructed on $x_1,\ldots,x_{i-1},z,x_{i+1},\ldots,x_n$. Then $\bigcup_iS_i$ is a connected graph containing the points $x_1,\ldots,x_n$. It follows that the sum of the lengths of the $S_i$'s cannot be lower than the length of $S$.
\end{proof}

For $n=3$, there is only one Steiner point, and it is known and easily seen that this point is precisely the Fermat point. We then see that the map $d\colon(\R^2)^3\to\R$ that carries $(x_1,x_2,x_3)$ into the length of the Euclidean Steiner tree constructed on the points $x_1,x_2,x_3$ is defined by
$$
d(x_1,x_2,x_3) ~=~ \min_{x\in\R^2}\sum_{i=1}^3|x_i-x|{\,}.
$$
It was shown \cite{KisMarTeh18} that this map is a $3$-distance with the property that $\frac{1}{2}\leq K^*_3\leq \frac{8}{15}\approx 0.533$. We now show that $K^*_3=\frac{1}{2}$, which means that this $3$-distance is standard.

\begin{proposition}
We have $K_3^*=\frac{1}{2}$ and this value is attained.
\end{proposition}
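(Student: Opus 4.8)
The plan is to improve the known upper bound $K_3^*\le\frac{8}{15}$ to $K_3^*\le\frac12$; since it is already known (from \cite{KisMarTeh18}, or from the general bound $K_n^*\ge(n-1)^{-1}$) that $K_3^*\ge\frac12$, this gives $K_3^*=\frac12$, and exhibiting one configuration with ratio exactly $\frac12$ settles attainment. The main tool is the first--order (subgradient) characterization of the Fermat point $F=F(x_1,x_2,x_3)$ of the convex function $x\mapsto|x_1-x|+|x_2-x|+|x_3-x|$: there exist $v_1,v_2,v_3\in\R^2$ with $|v_i|\le1$, $v_1+v_2+v_3=0$, and $v_i\cdot(x_i-F)=|x_i-F|$ for each $i$. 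When $F\ne x_i$ one takes for $v_i$ the outward unit vector $(x_i-F)/|x_i-F|$; when $F$ is a vertex of the triangle---which is exactly the case where some angle is at least $2\pi/3$---the corresponding $v_i$ is allowed to have norm $<1$, and this relaxation is what makes the argument uniform. Write $a_i=|x_i-F|$, so that $d(x_1,x_2,x_3)=a_1+a_2+a_3>0$.

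The heart of the proof is a one--line estimate applied three times. Fix $z\in\R^2$. For any $x\in\R^2$ and $j\in\{2,3\}$ the inequality $|x_j-x|\ge v_j\cdot(x_j-x)=a_j-v_j\cdot(x-F)$ (valid because $|v_j|\le1$) gives
\[
|z-x|+|x_2-x|+|x_3-x| ~\ge~ |z-x|+a_2+a_3+v_1\cdot(x-F),
\]
using $v_2+v_3=-v_1$; and since $v_1\cdot(x-F)=v_1\cdot(x-z)+v_1\cdot(z-F)\ge-|x-z|+v_1\cdot(z-F)$, the right--hand side is at least $a_2+a_3+v_1\cdot(z-F)$. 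Taking the minimum over $x$ yields $d(z,x_2,x_3)\ge a_2+a_3+v_1\cdot(z-F)$, and the cyclic variants $d(x_1,z,x_3)\ge a_1+a_3+v_2\cdot(z-F)$ and $d(x_1,x_2,z)\ge a_1+a_2+v_3\cdot(z-F)$ follow in the same way. Adding the three inequalities, the linear terms cancel because $v_1+v_2+v_3=0$, and we get
\[
d(z,x_2,x_3)+d(x_1,z,x_3)+d(x_1,x_2,z) ~\ge~ 2(a_1+a_2+a_3) ~=~ 2\,d(x_1,x_2,x_3),
\]
i.e.\ $R_d(x_1,x_2,x_3;z)\le\frac12$, whence $K_3^*\le\frac12$ and therefore $K_3^*=\frac12$. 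For attainment, take $z=F$: each of the three estimates becomes an equality, since evaluating the defining minimum at $x=F$ gives $d(F,x_2,x_3)\le a_2+a_3$ and similarly for the others; hence $\sum_i d(x_1,x_2,x_3)_i^F=2(a_1+a_2+a_3)$ and $R_d(x_1,x_2,x_3;F)=\frac12$ exactly. A concrete instance is an equilateral triangle with $z$ its centroid.

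I expect the only delicate point to be the degenerate/obtuse case, where the Fermat point coincides with a vertex: there the usual ``three unit vectors summing to $0$'' configuration does not exist, but replacing $|v_i|=1$ by $|v_i|\le1$ repairs this at no cost, because every inequality used above is of the form $|v|\ge v\cdot w$ with $|w|\le1$. It is also worth remarking---to explain why a purely combinatorial argument will not suffice---that bounding the three perturbed Steiner trees only by the fact that their union is a connected graph spanning $x_1,x_2,x_3$ yields merely $R_d\le\frac23$, so the supporting--hyperplane input is essential.
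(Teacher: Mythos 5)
Your proof is correct, and it takes a genuinely different route from the paper's. The paper argues by cases on the position of $z$: when $z$ lies in the (closed) triangle it asserts, via ``repeated use of the triangle inequality,'' that the three perturbed Steiner lengths sum to at least $2d(x_1,x_2,x_3)$, and when $z$ lies outside it projects $z$ onto the triangle and reduces to the first case; the details of the inner case are left as geometrically clear. You instead invoke the first-order optimality (subgradient) condition at the Fermat point $F$ to produce vectors $v_1,v_2,v_3$ with $|v_i|\le 1$, $\sum_i v_i=0$, $v_i\cdot(x_i-F)=|x_i-F|$, and derive the uniform linear lower bound $d(x_1,x_2,x_3)_i^z\ge d(x_1,x_2,x_3)-a_i+v_i\cdot(z-F)$ whose sum telescopes to $2d(x_1,x_2,x_3)$ for \emph{every} $z$, with no case analysis and with the obtuse/degenerate case ($F$ a vertex, $|v_i|<1$) handled at no extra cost. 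Your argument is the more rigorous of the two --- it replaces the paper's ``geometrically clear'' steps by explicit Cauchy--Schwarz estimates --- and it yields a sharper attainment statement: the ratio equals exactly $\frac12$ at $z=F$ for \emph{any} nondegenerate triple, not merely for the equilateral triangle with its centroid that the paper exhibits. Your closing remark is also apt: the paper's own connectivity argument for general $n$ (Proposition~\ref{prop:dsa85sfdsd}, or the pairing trick of Proposition~\ref{prop:sda7sds332}) only yields $K^*_3\le\frac23$ here, so some additional geometric input, such as your supporting-hyperplane inequality, really is needed to reach $\frac12$.
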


\begin{proof}
Let $x_1,x_2,x_3,z\in\R^2$. We can assume without loss of generality that the points $x_1,x_2,x_3$ are not collinear so that they form a nondegenerate triangle $T$. Suppose first that $z$ lies in the triangle $T$, including the sides. Using repeatedly the triangle inequality, it is geometrically clear that
$$
\sum_{i=1}^3d(x_1,x_2,x_3)_i^z ~\geq ~ 2{\,}d(x_1,x_2,x_3),
$$
that is, $R_d(x_1,x_2,x_3;z)\leq\frac{1}{2}$. Suppose now that $z$ lies outside $T$ and let $z^*\in\R^2$ be the closest point to $z$ that lies in $T$. The point $z^*$ exists and is unique by convexity of $T$. It is then also geometrically clear that
$$
\sum_{i=1}^3d(x_1,x_2,x_3)_i^z ~\geq ~ \sum_{i=1}^3d(x_1,x_2,x_3)_i^{z^*} ~\geq ~ 2{\,}d(x_1,x_2,x_3),
$$
so $R_d(x_1,x_2,x_3;z)\leq\frac{1}{2}$ again. Thus, we must have $K_3^*=\frac{1}{2}$. This value is attained if $T$ is an equilateral triangle with centroid $z$.
\end{proof}

In the following proposition, we provide both an alternative proof of Proposition~\ref{prop:dsa85sfdsd} and an upper bound for the best constant $K^*_n$.

\begin{proposition}\label{prop:sda7sds332443}
The map $d\colon (\R^q)^n\to\R_+$ that carries $(x_1,\ldots,x_n)$ into the length of the Euclidean Steiner tree constructed on the points $x_1,\ldots,x_n$ is an $n$-distance on $\R^q$. Moreover, we have $K^*_n\leq\frac{2}{n}$ and this inequality is strict whenever $n\geq 3$.
\end{proposition}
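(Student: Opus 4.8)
The plan is to mirror the proof of Proposition~\ref{prop:sda7sds332} with minimal spanning trees replaced by Euclidean Steiner trees. Fix $x_1,\ldots,x_n,z\in\R^q$ with $|\{x_1,\ldots,x_n\}|\ge 2$, let $S$ be a Euclidean Steiner tree on $x_1,\ldots,x_n$, and for each $i$ let $S_i$ be a Euclidean Steiner tree on $x_1,\ldots,x_{i-1},z,x_{i+1},\ldots,x_n$, so that $\mathrm{len}(S_i)=d(x_1,\ldots,x_n)_i^z$. The one fact driving the whole argument is this: for any two distinct indices $i,j$, the union $S_i\cup S_j$ is a connected network (the two trees share the point $z$) whose vertex set contains every $x_k$ (if $k\ne i$ then $x_k\in S_i$, while $x_i\in S_j$ since $i\ne j$). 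Because $S$ is a shortest network connecting $x_1,\ldots,x_n$, this gives
$$d(x_1,\ldots,x_n)=\mathrm{len}(S)\le\mathrm{len}(S_i\cup S_j)\le\mathrm{len}(S_i)+\mathrm{len}(S_j)=d(x_1,\ldots,x_n)_i^z+d(x_1,\ldots,x_n)_j^z.$$
Taking $\{i,j\}=\{1,2\}$ and discarding the remaining nonnegative terms already yields the simplex inequality, which, together with the obvious conditions (i) and (ii), re-proves that $d$ is an $n$-distance.

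For the bound on the best constant, I would apply the displayed inequality to each of the $n$ cyclically consecutive pairs $\{1,2\},\{2,3\},\ldots,\{n,1\}$ and add the results. Since each index $i$ belongs to exactly two of these pairs, the summation produces $n\,d(x_1,\ldots,x_n)\le 2\sum_{i=1}^n d(x_1,\ldots,x_n)_i^z$, hence $R_d(x_1,\ldots,x_n;z)\le 2/n$ and therefore $K^*_n\le 2/n$.

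It remains to argue strictness for $n\ge 3$. If $R_d(x_1,\ldots,x_n;z)=2/n$, then all $n$ cyclic inequalities above are equalities, so for consecutive $i,j$ the trees $S_i,S_j$ are edge-disjoint and $S_i\cup S_j$ is itself a Steiner minimal tree on $\{x_1,\ldots,x_n\}$ that happens to pass through the auxiliary point $z$. When $x_1,\ldots,x_n$ take $m\ge 3$ distinct values this is impossible: comparing edge counts for $E(S_i\cup S_j)=E(S_i)\cup E(S_j)$ (a disjoint union), using that a Steiner tree on $p$ terminals with $\sigma$ Steiner points has $p+\sigma-1$ edges and that $z$ is a degree-$3$ Steiner point of $S_i\cup S_j$ but merely a terminal of each of $S_i$ and $S_j$, forces $m\le 2$, a contradiction. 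When $m\le 2$ one checks directly that $R_d\le (n-1)^{-1}<2/n$. Thus $R_d<2/n$ for every admissible tuple; a short compactness remark (sending any terminal to infinity drives the ratio to $0$, so one may restrict to a compact family of configurations, on which the Steiner functional is continuous) upgrades this to $K^*_n<2/n$. I expect this last point --- pinning down exactly when the edge-disjoint union of two Steiner trees can be a Steiner minimal tree --- to be the only delicate step; everything else is a direct transcription of the spanning-tree argument in Proposition~\ref{prop:sda7sds332}.
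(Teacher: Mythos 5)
Your core argument is exactly the paper's: the published proof consists of the single sentence that it ``can be immediately adapted from that of Proposition~\ref{prop:sda7sds332}'', i.e., precisely the cyclic pairing of the trees $S_1,\ldots,S_n$ that you carry out. In fact the adaptation is \emph{cleaner} in the Steiner setting than in the spanning-tree setting: since a Euclidean Steiner tree is by definition shortest among \emph{all} connected networks containing the terminals, the inequality $\mathrm{len}(S)\le\mathrm{len}(S_i\cup S_j)$ needs no further justification, whereas for minimum spanning trees one has to worry about the union containing the extra vertex $z$. The summation over the $n$ consecutive pairs and the conclusion $K^*_n\le 2/n$ are correct and match the paper.

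The strictness step is where your write-up goes beyond the paper (which merely asserts strictness is ``easy to see''), and it is also where your specific argument does not hold up. If $S_i\cup S_j$ were a shortest network on $\{x_1,\ldots,x_n\}$, the point $z$ need \emph{not} become a degree-$3$ Steiner point of it: it could have degree $2$ with its two incident edges collinear (an interior point of an edge of the union), so the edge count does not force $m\le 2$ as claimed. A more robust observation: for $n\ge 3$ the trees $S_i$ and $S_j$ share, besides $z$, at least one terminal $x_k$ with $k\notin\{i,j\}$; if they are edge-disjoint and $z\ne x_k$, their union contains two edge-disjoint paths from $z$ to $x_k$ and hence a cycle, so it cannot be a shortest network (which is a tree), and the inequality for that pair is strict. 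The degenerate case $z=x_k$ for all $k\notin\{i,j\}$ is then handled directly and yields $R_d\le(n-1)^{-1}$. Finally, be aware that strict inequality for every admissible tuple does not by itself yield $K^*_n<2/n$; the normalization-plus-compactness argument you allude to is genuinely needed and is only sketched in your proposal (to be fair, the paper does not supply it either).
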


\begin{proof}
The proof can be immediately adapted from that of Proposition~\ref{prop:sda7sds332}.
\end{proof}

\section{Concluding remarks}

We have explored four examples of $n$-distances based on geometric constructions. These examples share the common feature that proving the simplex inequality is not immediate and that finding the value of the associated best constant is rather challenging.

It is likely that there are many other interesting examples that can be considered and examined. Actually, there must be plenty of natural ways to construct maps that look like $n$-distances. However, for many of them it may be very tricky to establish that they are genuine $n$-distances and find their associated best constants.

For example, considering the planar version (i.e., $q=2$) of the map introduced in Definition~\ref{de:DLIEB57}, we could relax the second condition for a circle to be inner by considering the following property.
\begin{itemize}
\item \emph{The circle goes through at least two points $x_i$ and $x_j$ for which $\|x_i-x_j\|_2$ is the diameter of the circle or it goes through at least three pairwise distinct points $x_i,x_j,x_k$ that are the vertices of an acute triangle.}\footnote{We consider acute triangles only to avoid highly discontinuous situations (e.g., when considering the three points $x_1=(-1,0)$, $x_2=(0,0)$, $x_3=(1,\varepsilon)$ in $\R^2$ for a small value of $\varepsilon$).}
\end{itemize}
This new condition may provide a better intuition of what a circle ``bounded'' by $n$ points should look like, but it might make the problem more difficult to solve, even when $n=3$.

In conclusion, we observe that more general results on how to prove the simplex inequality and find the exact value of the associated best constant will be most welcome. This area of investigation seems to be very intriguing and our hope, after examining some interesting examples in this paper, is to spark the interest and enthusiasm of researchers in this theory.

Let us end this paper with a few interesting examples that constitute very natural open problems.

\begin{itemize}
\item \emph{Number of lines defined by $n$ points in the Euclidean space.} Let $q\geq 2$ be an integer. Any two distinct points $x,y\in\R^q$ define a unique straight line through these points. It was shown \cite{KisMarTeh18} that the map $d\colon(\R^q)^n\to\R$ that carries $(x_1,\ldots,x_n)$ into the number of distinct lines defined by the points $x_1,\ldots,x_n$ is an $n$-distance on $\R^q$. Also, the associated best constant $K^*_n$ satisfies the inequalities
$$
(n-2+2/n)^{-1} ~\leq ~ K^*_n ~<~ (n-2)^{-1},
$$
where the upper bound $(n-2)^{-1}$ is defined only when $n\geq 3$. After examining this question carefully, we conjecture that $K^*_n=(n-2+2/n)^{-1}$ for any integer $n\geq 2$, which would imply that $d$ is nonstandard if and only if $n\geq 3$. This value is attained, e.g., when $x_1,\ldots,x_n$ are pairwise distinct and placed clockwise on a circle and $z=x_1$.

\item \emph{Diameter of the smallest enclosing ball in the Euclidean space.} Let $q\geq 2$ be an integer. For any points $x_1,\ldots,x_n$ in $\R^q$, we let $S(x_1,\ldots,x_n)$ denote the smallest $(q-1)$-dimensional sphere enclosing the points $x_1,\ldots,x_n$. We conjecture that the map $d\colon(\R^q)^n\to\R$ that carries $(x_1,\ldots,x_n)$ into the diameter (or equivalently the radius) of the sphere $S(x_1,\ldots,x_n)$ is a standard $n$-distance. This statement was actually proved in \cite{KisMarTeh18} when $q=2$. The value $K_n^*=(n-1)^{-1}$ is attained, e.g., when $x_1\neq x_2=\cdots =x_n=z$.

\item \emph{Volume of the smallest enclosing ball in the Euclidean space.} Consider the smallest enclosing ball defined in the previous example and assume that $n\geq 3-2^{1-q}$. We conjecture that the map $d\colon(\R^q)^n\to\R$ that carries $(x_1,\ldots,x_n)$ into the $q$-dimensional volume of the ball bounded by the sphere $S(x_1,\ldots,x_n)$ is an $n$-distance for which $K_n^*=(n-2+2^{1-q})^{-1}$. This statement was proved in \cite{KisMarTeh18} when $q=2$. Moreover, this value of $K_n^*$ is attained, e.g., when $x_1\neq x_2$ and $x_3=\cdots =x_n=z$ is the midpoint of $x_1$ and $x_2$.
\end{itemize}

\section*{Acknowledgments}

G. Kiss is supported by Premium Postdoctoral Fellowship of the Hungarian Academy of Sciences and by the Hungarian National Foundation for Scientific Research, Grant No. K124749.

\end{document}